\theoremstyle{plain}
\newtheorem{thm}{\protect\theoremname}
  \theoremstyle{plain}
  \newtheorem{cor}{\protect\corollaryname}
  \theoremstyle{plain}
  \newtheorem{lem}{\protect\lemmaname}
\DeclareMathOperator*{\argmin}{argmin}
\DeclareMathAlphabet{\mathcal}{OMS}{cmsy}{m}{n}
  \providecommand{\lemmaname}{Lemma}
\providecommand{\corollaryname}{Corollary}
\providecommand{\theoremname}{Theorem}
\begin{document}
\global\long\def\mb#1{\boldsymbol{#1}}
\global\long\def\mbb#1{\mathbb{#1}}
\global\long\def\mc#1{\mathcal{#1}}
\global\long\def\mcc#1{\mathscr{#1}}
\global\long\def\mr#1{\mathrm{#1}}
\global\long\def\msf#1{\mathsf{#1}}
\global\long\def\E{\mb{\msf E}}
\global\long\def\P{\mb{\msf P}}
\global\long\def\tbullet{\tiny{\mbox{\ensuremath{\bullet}}}}

\title{Near-Optimal Estimation of Simultaneously Sparse and Low-Rank Matrices from Nested Linear Measurements}

\author{{\sc Sohail Bahmani},\\[2pt]
School of Electrical and Computer Engineering, Georgia Institute of Technology, Atlanta, GA\\
\texttt{sohail.bahmani@ece.gatech.edu}\\[2pt]
{\sc Justin Romberg}\\[2pt]
School of Electrical and Computer Engineering, Georgia Institute of Technology, Atlanta, GA\\
\texttt{jrom@ece.gatech.edu}}

\maketitle

\begin{abstract}
In this paper we consider the problem of estimating simultaneously
low-rank and row-wise sparse matrices from nested linear measurements
where the linear operator consists of the product of a linear operator
$\mc W$ and a matrix $\mb{\varPsi}$. Leveraging the nested structure
of the measurement operator, we propose a computationally efficient
two-stage algorithm for estimating the simultaneously structured target
matrix. Assuming that $\mc W$ is a restricted isometry for low-rank
matrices and $\mb{\varPsi}$ is a restricted isometry for row-wise
sparse matrices, we establish an accuracy guarantee that holds uniformly
for all sufficiently low-rank and row-wise sparse matrices with high
probability. Furthermore, using standard tools from information theory,
we establish a minimax lower bound for estimation of simultaneously
low-rank and row-wise sparse matrices from linear measurements that
need not be nested. The accuracy bounds established for the algorithm,
that also serve as a minimax upper bound, differ from the derived
minimax lower bound merely by a polylogarithmic factor of the dimensions.
Therefore, the proposed algorithm is nearly minimax optimal. We also
discuss some applications of the proposed observation model and evaluate
our algorithm through numerical simulation.
\end{abstract}
\section{\label{sec:Introduction}Introduction}

In this paper we study the problem of estimating a matrix $\mb X^{\star}$
that is simultaneously low-rank and row-wise sparse from the noisy
linear measurements 
\begin{align}
\mb y & =\mc A\left(\mb X^{\star}\right)+\mb z,\label{eq:measurements}
\end{align}
 where $\mc A$ is a known linear operator and $\mb z$ models the
additive error or noise. In particular, we consider this problem in
the high-dimensional regime where the dimension of the measurements
$\mb y$ can be much smaller than the ambient dimension of the target
$\mb X^{\star}$. Estimation of low-rank matrices from the linear
measurements (\ref{eq:measurements}) has been studied extensively
in various settings (see, e.g., \cite{Candes_Exact_2009}, \cite{Recht_Guaranteed_2010},
and \cite{Koltchinskii_Nuclear_2011}). However, in cases where the
target matrix is characterized by two or more structures simultaneously,
the research regarding efficient and statistically optimal estimation
is less developed. Ideally, different structures of the target matrix
can be exploited to improve the sample complexity.

In this paper, assuming that the measurement operator $\mc A$ has
a nested structure as described below, we address the problem of estimating
the simultaneously low-rank and row-wise sparse $\mb X^{\star}$ from
the linear measurements (\ref{eq:measurements}). Throughout the paper we may refer row-wise sparse matrices simply as sparse matrices unless explicitly stated otherwise.
 The nested operator
$\mc A$ is the product of a linear operator and a matrix which
we assume to be \emph{restricted (near) isometries} for low-rank and
row-wise sparse matrices, respectively. The inner matrix component in $\mc{A}$ compresses the columns of the target matrix without destroying its low-rank structure. This compression can be inverted because the target has sparse columns. The outer linear operator in $\mc{A}$ exploits the low-rank structure in the compressed matrix to compress the dimensions even more. We propose a computationally efficient method that relies on the nested structure of the measurement operator. We show that the proposed method is accurate uniformly for all simultaneously low-rank and row-wise sparse targets. Therefore,
the obtained accuracy bound can be interpreted as an upper bound for
the minimax rate as well. Furthermore, using standard information
theoretic tools we establish a minimax lower bound that holds for
any well-bounded linear operator $\mc A$, nested or otherwise. As
will be seen below, the derived minimax bounds differ only by a polylogarithmic
factor of the dimensions which confirm (near) optimality of the proposed
estimation method.

\subsection{Notation}

Let us first set the notation used throughout the paper. Matrices
and vectors are denoted by bold capital and small letters, respectively.
The Hermitian adjoint of linear operators and matrices is denoted
by a superscript asterisk such as $\mc M^{*}$ and $\mb M^{*}$. The
set of positive integers less than or equal to $n$ is denoted by
$\left[n\right]$. For any $a\times b$ matrix $\mb M$ and a set
$S\subseteq\left[b\right]$, the $a\times\left|S\right|$ restriction
of $\mb M$ to the columns indexed by $S$ is denoted by $\mb M_{S}$.
In particular, $\mb I_{a,S}$ denotes the restriction of the $a\times a$
identity matrix to the columns indexed by $S$. The all-zero and all-one
matrices of size $a\times b$ are denoted by $\mb 0_{a\times b}$
and $\mb 1_{a\times b}$, respectively. The Kronecker product of matrices
$\mb M_{1}$ and $\mb M_{2}$ is denoted by $\mb M_{1}\otimes\mb M_{2}$.
The orthogonal complement of any subspace $V$ is denoted by $V^{\perp}$.
The maximum and minimum of two numbers $a$ and $b$ are denoted by
$a\vee b$ and $a\wedge b$, respectively. The notation $f=\msf O\left(g\right)$
is used when $f=cg$ for some absolute constant $c>0$. We may also
use $f\lesssim g$ (or $f\gtrsim g$) to denote $f\leq cg$ (or $f\geq cg$)
for some absolute constant $c>0$. The largest integer that is less
than or equal to a real number $a$ is denoted by $\left\lfloor a\right\rfloor $.
Similarly, $\left\lceil a\right\rceil $ denotes the smallest integer
greater than or equal to $a$. The function $d_{\msf H}\left(\cdot,\cdot\right)$
denotes the natural Hamming distance between a pair of objects that
can be represented uniquely by finite binary sequences of the same
length (e.g., binary vectors, binary matrices, subsets of a finite
set, etc). For any matrix $\mb M$, the number of nonzero rows, the
Frobenius norm, the nuclear norm, and the sum of the row-wise $\ell_{2}$-norms
(i.e., the $\ell_{1,2}$-norm) are denoted by $\left\Vert \mb M\right\Vert _{0,2}$,
$\left\Vert \mb M\right\Vert _{F}$, $\left\Vert \mb M\right\Vert _{*}$,
and $\left\Vert \mb M\right\Vert _{1,2}$, respectively.

\subsection{\label{sub:Problem-setup}Problem setup}

We would like to estimate a $p_{1}\times p_{2}$ matrix $\mb X^{\star}$
from noisy linear measurements given by (\ref{eq:measurements}) where
the measurement operator $\mc A:\mbb R^{p_{1}\times p_{2}}\to\mbb R^{n}$
is assumed to have the nested form 
\begin{align}
\mc A\left(\mb X\right) & =\mc W\left(\mb{\varPsi}\mb X\right),\label{eq:nested-measurements}
\end{align}
 with the matrix $\mb{\varPsi}\in\mbb R^{m\times p_{1}}$ and the
linear operator $\mc W:\mbb R^{m\times p_{2}}\to\mbb R^{n}$ known.
We also assume throughout the paper that $\mb z$ in (\ref{eq:measurements})
is a Gaussian noise with $\msf N\left(\mb 0_{n\times1},\sigma^{2}\mb I\right)$
distribution. The estimation problem is assumed to be in the high-dimensional
regime (i.e., $n\ll p_{1}p_{2}$). We consider the target matrices
to have rank at most $r\ll p_{1}\wedge p_{2}$ and at most $k\ll p_{1}$
nonzero rows, where clearly $r\leq k$. The set of all such matrices
can be explicitly defined as 
\begin{align*}
\mbb X{}_{k,r} & \coloneqq\left\{ \mb X\in\mbb R^{p_{1}\times p_{2}}\mid\left\Vert \mb X\right\Vert _{0,2}\leq k,\ \msf{rank}\left(\mb X\right)\leq r\right\} ,
\end{align*}
 where $\left\Vert \mb X\right\Vert _{0,2}$ denotes the number of
nonzero rows of $\mb X$. We denote the set of matrices with rank
at most $r$ by $\mbb X_{\tbullet,r}$ and the set of matrices with
at most $k$ nonzero rows by $\mbb X_{k,\tbullet}$. Furthermore,
we assume that the rank parameter $r=\msf{rank}\left(\mb X^{\star}\right)$
and the noise variance $\sigma^{2}$ are known to the estimator.

The accuracy guarantees of our method can be treated as a minimax
upper bound, if they hold uniformly for all target matrices in $\mbb X_{k,r}$.
To derive the desired uniform accuracy guarantees we assume that $\mb{\varPsi}$
is a restricted isometry for sparse matrices and $\mc W$ is
a restricted isometry for low-rank matrices which are common assumptions
for compressive sensing and low-rank matrix estimation \cite{Baraniuk_Simple_2008,Recht_Guaranteed_2010,Candes_Tight_2011}.
 To be precise, for any integer $k\in\left[p_{1}\right]$, $\mb{\varPsi}$
is a said to be a restricted isometry over $\mbb X_{k,\tbullet}$
if there exists a \emph{restricted isometry constant} $\delta_{k,\tbullet}\left(\mb{\varPsi}\right)\in\left[0,1\right]$
such that 
\begin{align*}
\forall\mb X\in\mbb X_{k,\tbullet}:\ \left(1-\delta_{k,\tbullet}\left(\mb{\varPsi}\right)\right)\left\Vert \mb X\right\Vert _{F}^{2}\leq\left\Vert \mb{\varPsi}\mb X\right\Vert _{F}^{2} & \leq\left(1+\delta_{k,\tbullet}\left(\mb{\varPsi}\right)\right)\left\Vert \mb X\right\Vert _{F}^{2}.
\end{align*}
 Clearly, any restricted isometry for $k$-sparse vectors is a restricted isometry over $\mbb{X}_{k,\tbullet}$ and vice versa. Similarly, for all integers $r\in\left[m\wedge p_{2}\right]$, $\mc W$
is said to be the restricted isometry over $\mbb X_{\tbullet,r}$
if there exists a restricted isometry constant $\delta_{\tbullet,r}\left(\mc W\right)\in\left[0,1\right]$
such that 
\begin{align*}
\forall\mb X\in\mbb X_{\tbullet,r}:\ \left(1-\delta_{\tbullet,r}\left(\mc W\right)\right)\left\Vert \mb X\right\Vert _{F}^{2}\leq\left\Vert \mc W\left(\mb X\right)\right\Vert _{2}^{2} & \leq\left(1+\delta_{\tbullet,r}\left(\mc W\right)\right)\left\Vert \mb X\right\Vert _{F}^{2}.
\end{align*}

The goal of our minimax approach is to characterize the tolerance
level $\tau$ in terms of the parameters of the estimation problem,
such that the ``minimax failure probability'' given by 
\begin{align*}
\inf_{\widehat{\mb X}}\sup_{\mb X^{\star}\in\mbb X_{k,r}} & \P\left(\left\Vert \widehat{\mb X}-\mb X^{\star}\right\Vert _{F}\geq\tau\right)
\end{align*}
is sufficiently small. In words, $\tau$ would be the worst-case accuracy
of the best estimator that holds with high probability. As will be
seen in Section \ref{sec:Main-Results}, we use the restricted isometry
properties mentioned above to provide a uniform accuracy guarantee
for our proposed method which also serves as a minimax upper bound.
To establish the minimax lower bound, we employ some of the standard
information theoretic tools that help to determine non-trivial values
of $\tau$ for which the above minimax failure probability is significant,
e.g., more than $\frac{1}{2}$.  Derivation of the minimax lower bound
does not depend on the restricted isometry assumptions and  we merely
 assume that 
\begin{align}
\forall\mb X\in\mbb X_{k,r}:\ \left\Vert \mc A\left(\mb X\right)\right\Vert _{2}^{2} & \leq\gamma_{k,r}\left\Vert \mb X\right\Vert _{F}^{2},\label{eq:A-bounded}
\end{align}
 for some constant $\gamma_{k,r}>0$. Of course, if $\mc A$ is defined
by (\ref{eq:nested-measurements}) and we happen to have the restricted
isometries $\mb{\varPsi}$ and $\mc W$ as above, clearly there exists a constant $\gamma_{k,r}$ such that $\gamma_{k,r}\leq\left(1+\delta_{\tbullet,r}\left(\mc W\right)\right)\left(1+\delta_{k,\tbullet}\left(\mb{\varPsi}\right)\right)$.

\subsection{\label{sub:Contribution}Contributions}

\begin{itemize}
\item \textbf{Minimax upper bound:}

Under the observation model described by (\ref{eq:measurements})
and (\ref{eq:nested-measurements}) we propose a method that
\begin{quote}
\textit{With $n=\msf O\left(r\left(m\vee p_{2}\right)\right)$ and
under the assumption that $\mc W$ and $\mb{\varPsi}$ are restricted
isometries over rank-$4r$ matrices (i.e., $\mbb X_{\tbullet,4r}$)
and row-wise $2k$-sparse matrices (i.e., $\mbb X_{2k,\tbullet}$),
respectively, produces some estimate $\widehat{\mb X}$ of $\mb X^{\star}$
such that 
\begin{align*}
\left\Vert \widehat{\mb X}-\mb X^{\star}\right\Vert _{F} & \lesssim\sigma\sqrt{r\left(m\vee p_{2}\right)},
\end{align*}
holds uniformly for all $\mb X^{\star}\in\mbb X_{k,r}$.}
\end{quote}

In particular, we show that if $\mb{\varPsi}$ and $\mc W$ are Gaussian
with properly scaled iid entries then 
\begin{quote}
\textit{With $m=\msf O\left(k\,\log\frac{p_{1}}{k}\right)$ and $n=\msf O\left(r\left(m\vee p_{2}\right)\right)$
the desired restricted isometry properties hold and the produced estimate
obeys 
\begin{align*}
\left\Vert \widehat{\mb X}-\mb X^{\star}\right\Vert _{F} & \lesssim\sigma\sqrt{r\left(k\log\frac{p_{1}}{k}\vee p_{2}\right)},
\end{align*}
for all $\mb X^{\star}\in\mbb X_{k,r}$.}
\end{quote}

Because the above accuracy results hold uniformly for all targets
in $\mbb X_{k,r}$, they can also be viewed as a minimax upper bound.

\item \textbf{Minimax lower bound:}

We also establish a minimax lower bound merely under the
assumption that the linear measurement operator $\mc A$ satisfies
(\ref{eq:A-bounded}) whether it takes the nested form (\ref{eq:nested-measurements})
or not. Namely, we show that 
\begin{quote}
\textit{If $\mc A$ obeys (\ref{eq:A-bounded}) then for a sufficiently
small absolute constant $c>0$ and for any estimator $\widehat{\mb X}$
there exists a target $\mb X^{\star}\in\mbb X_{k,r}$ such that 
\begin{align*}
\left\Vert \widehat{\mb X}-\mb X^{\star}\right\Vert _{F} & \geq c\sigma\sqrt{\frac{k\log\frac{p_{1}}{k}+r\left(k\vee p_{2}\right)}{\gamma_{k,r}}}
\end{align*}
 holds with probability at least $\frac{1}{2}$.}
\end{quote}
\end{itemize}
With $\gamma_{k,r}=\msf O\left(1\right)$, the obtained minimax lower
and upper bounds differ only by some logarithmic factors, which shows
that by exploiting the nested form of the measurements the proposed
estimator achieves a near optimal minimax rate. For comparison, if
we only considered the low-rank structure, then for any estimator
the estimation error in Frobenius norm would have been bounded by
$\msf O\left(\sigma\sqrt{r\left(p_{1}\vee p_{2}\right)}\right)$ from
below (cf. \cite{Rohde_Estimation_2011} and \cite[Theorem 2.5]{Candes_Tight_2011}).
Similarly, considering only the sparsity as the structure of $\mb X^{\star}$,
the accuracy lower bounds for sparse estimation (e.g.,\cite[Theorem 1]{Raskutti_Minimax_2011})
suggest the lower bound $\msf O\left(\sigma\sqrt{p_{2}k\,\log\frac{p_{1}}{k}}\right)$
for the estimation error. Therefore, with the natural assumption that
$p_{1}\gg p_{2}$, these lower bounds indicate that exploiting the
low-rank and sparse structures simultaneously has reduced the estimation
error dramatically. Furthermore, in the noise-free scenario, our results
suggest that we can recover $\mb X^{\star}$ exactly from $\msf O\left(r\left(k\log\frac{p_{1}}{k}\vee p_{2}\right)\right)$
measurements. Therefore, the nested structure of the measurements
and the proposed estimation procedure have a critical role in achieving
the mentioned sample complexity which cannot be achieved by minimization
of any convex combination of the nuclear norm and the $\ell_{1,2}$-norm
for many common types of linear measurements \cite{Oymak_Simultaneously_2015}.

\subsection{Related work}

Many important problems in statistics and machine learning such as
Sparse Principal Component Analysis (SPCA) can be formulated as estimation
of simultaneously low-rank and sparse matrices. These estimation problems
are significantly more challenging than estimation of matrices that
are either low-rank or sparse. In \emph{sparse spiked covariance}
estimation \cite{Johnstone_Consistency_2009} and more generally SPCA,
given a relatively small number of independent samples of a mean-zero
multivariate Gaussian, the primary goal is to estimate the principal
eigenvectors of the associated covariance matrix that are assumed
to be sparse. Many convex and non-convex algorithms have been proposed
for these problems including but not limited to \cite{d'Aspremont_Direct_2007,Johnstone_Consistency_2009,Amini_High-dimensional_2009,Cai_Sparse_2013,Vu_Fantope_2013,Wang_Tighten_2014,Cai_Optimal_2015}.
Each of these algorithms provides a different trade-off between statistical
optimality and computational complexity. However, as shown in \cite{Berthet_Complexity_2013},
there may be an inevitable gap between the optimal statistical accuracy
and the statistical accuracy that polynomial-time algorithms can achieve
in detection of sparse principal components. Since SPCA can be viewed
as a primitive for solving linear regression with simultaneously low-rank
and sparse target matrices, the result of \cite{Berthet_Complexity_2013}
can also indicate the difficulty of the latter problem.

The problem of denoising low-rank and row-wise sparse matrices is
also considered in \cite{Buja_Optimal_2013}. The iterative algorithm
proposed in \cite{Buja_Optimal_2013} is basically an adaptation of
the \emph{power iteration} where the factors are sparsified through
thresholding. It is shown, that under mild conditions, this iterative
method can be initialized appropriately and then is shown to be nearly
minimax optimal. A more related work to our problem is multiple linear
regression considered in \cite{Ma_Adaptive_2014} where the goal is
to estimate $\mb X^{\star}$ that is low-rank and row-wise sparse
from the measurements $\mb Y=\mb A\mb X^{\star}+\mb Z$ with $\mb A$
and $\mb Z$ being the design and the noise matrices, respectively.
The algorithm of \cite{Ma_Adaptive_2014} uses the low-rank structure
of $\mb X^{\star}$ to construct an initial estimate $\mb V$ of the
right singular vectors of $\mb X^{\star}$. Using this estimate, the
algorithm then solves a least squares with a convex regularization
for row-wise sparsity to estimate $\mb B$, the projection of $\mb X^{\star}$
onto the estimated singular vectors. Then $\mb V$ is updated to be
the right singular vectors of $\mb Y$ after projecting its range
onto the range of $\mb B$. Finally, the algorithm updates $\mb B$
by repeating the mentioned convex optimization and outputs $\mb B\mb V^{\msf T}$
as the estimate of $\mb X^{\star}$ which is shown to be nearly minimax
optimal. 

Furthermore, an efficient alternating minimization method for estimation
of rank-one and sparse matrices from linear measurements is proposed
in \cite{Lee_Near_2013}. The algorithm is shown to be accurate and
nearly optimal in terms of sample complexity, provided that the factors
of the target rank-one matrix have relatively dominant spikes and
the noise level is moderate. Estimation of simultaneously structured
matrices from compressive linear measurements is also studied in \cite{Oymak_Simultaneously_2015}.
It is shown in \cite{Oymak_Simultaneously_2015} that using convex
proxies for each of the assumed structures independently would result
in a suboptimal sample complexity. Specifically, it is shown \cite{Oymak_Simultaneously_2015}
that minimization of any mixture of the $\ell_{1,2}$-norm and the
nuclear norm fails to recover the true signal if there are less than
$\msf O\left(kp_{2}\wedge r\left(p_{1}+p_{2}\right)\right)$ measurements.
Therefore, our results as stated above show a significant improvement
over the na\"{\i}ve convex relaxation method studied in \cite{Oymak_Simultaneously_2015}.

\section{\label{sec:Main-Results}Main Results}
\subsection{Two-stage estimator}
We propose the following two-stage method for estimation of simultaneously
low-rank and row-wise sparse matrices from measurements given by (\ref{eq:measurements})
when $\mc A$ has the nested structure described by (\ref{eq:nested-measurements}).
The nuclear norm and the sum of row-wise $\ell_{2}$-norm are denoted
by $\left\Vert \cdot\right\Vert _{*}$ and $\left\Vert \cdot\right\Vert _{1,2,}$,
respectively.
\begin{enumerate}
\item Low-rank estimation stage:
\begin{align}
\widehat{\mb B} & \in\argmin_{\mb B}\left\Vert \mb B\right\Vert _{*}\label{eq:pre-estimate}\\
 & \text{subject to }\left\Vert \mc W\left(\mb B\right)-\mb y\right\Vert _{2}\leq\sigma\sqrt{n+c_{1}r\left(m\vee p_{2}\right)}\nonumber 
\end{align}

\item Sparse estimation stage:
\begin{align}
\widehat{\mb X} & \in\argmin_{\mb X}\left\Vert \mb X\right\Vert _{1,2}\label{eq:estimate}\\
 & \text{subject to }\left\Vert \mb{\varPsi}\mb X-\widehat{\mb B}\right\Vert _{F}\leq c_{2}\sigma\sqrt{r\left(m\vee p_{2}\right)}\nonumber 
\end{align}

\end{enumerate}
With appropriately chosen constants $c_{1}$ and $c_{2}$ each stage
is a convex program for which many efficient solvers exist. 
\paragraph{Variations of the estimator:}
The specific form of the optimization that expresses the
low-rank estimation and the sparse estimation stages is not critically
important. It is only necessary to produce a solution that is sufficiently
accurate in each stage. For example, with the appropriate regularization
parameter $\lambda>0$, we could have used the regularized analog
of (\ref{eq:pre-estimate}) given by 
\begin{align*}
\widehat{\mb B} & \in\argmin_{\mb B}\frac{1}{2}\left\Vert \mc W\left(\mb B\right)-\mb y\right\Vert _{2}^{2}+\lambda\left\Vert \mb B\right\Vert _{*}
\end{align*}
 which also enjoys the desired accuracy guarantees \cite{Candes_Tight_2011}.
Similarly, the sparse estimation stage can be performed by the regularized
form of (\ref{eq:estimate}). Furthermore, because we are assuming
that $\mb{\varPsi}$ and $\mc W$ are restricted isometries, the convex
programs considered in (\ref{eq:pre-estimate}) and (\ref{eq:estimate})
could be replaced by non-convex greedy algorithms such as the \emph{iterative
hard thresholding} (IHT) \cite{Blumensath_Iterative_2009} that achieve the same accuracy usually at a lower
computational cost. These methods, however, usually require tighter
bounds on the restricted isometry constants.

\paragraph{Post-processing:}
The solutions obtained by (\ref{eq:pre-estimate}) and (\ref{eq:estimate})
generally are not low-rank or sparse. To enforce the desired structures,
after each stage of the estimator we can simply project the estimator
onto the set of low-rank and/or row-wise sparse matrices. It is straightforward
to show that these post-processing steps will not change the derived
error bounds beyond a constant factor. Furthermore, we can treat range
of the best rank-$r$ approximation to $\widehat{\mb B}$ as an estimate
for the range of $\mb{\varPsi}\mb X^{\star}$ and pass it to a sparse
estimation stage similar to (\ref{eq:estimate}), but with an optimization
variable that has $r$ columns rather than $p_{2}$. Therefore, we
can significantly reduce the computational cost of the second stage
of the estimator. However, to analyze the performance of this modified
estimator it is necessary to convert the range estimation to the actual
estimation error which we do not pursue in this paper.
\subsection{Accuracy of the estimator and the minimax upper bound}
In this section we state our result on the statistical accuracy of
the two-stage estimator described by (\ref{eq:pre-estimate}) and
(\ref{eq:estimate}). This accuracy guarantee can be viewed as a minimax
upper bound as well. 

For $\mb z\sim\msf N\left(\mb 0_{n\times1},\sigma^{2}\mb I\right)$,
tail bounds for chi-squared random variables \cite[Lemma 1]{Laurent_Adaptive_2000}
guarantee that for any $\nu>0$ we have 
\begin{align*}
\sigma^{2}\left(n-\nu\right)\leq\left\Vert \mb z\right\Vert _{2}^{2} & \leq\sigma^{2}\left(n+\nu\right)
\end{align*}
with probability at least $1-2\exp\left(-\left(\frac{\nu}{4}\wedge\frac{\nu^{2}}{16n}\right)\right)$.
Therefore, assuming that $n=\msf O\left(r\left(m\vee p_{2}\right)\right)$
and choosing $\nu=c_{1}r\left(m\vee p_{2}\right)$ for a sufficiently
large absolute constant $c_{1}>0$ we have 
\begin{align}
\sigma^{2}\left(n-c_{1}r\left(m\vee p_{2}\right)\right)\leq\left\Vert \mb z\right\Vert _{2}^{2} & \leq\sigma^{2}\left(n+c_{1}r\left(m\vee p_{2}\right)\right),\label{eq:noise-bound}
\end{align}
 with high probability. Therefore, we can guarantee that the matrix
$\mb B^{\star}=\mb{\varPsi}\mb X^{\star}$ is in the feasible set
of (\ref{eq:pre-estimate}) with high probability. Consequently, we
can invoke Lemma \ref{lem:pre-estimate} and guarantee that $\mb X^{\star}$
is in the feasible set of (\ref{eq:estimate}) for appropriate choice
of the constant $c_{2}>0$. Of course, the constants $c_{1}$ and
$c_{2}$ depend on the restricted isometry constants of $\mc W$ and
$\mb{\varPsi}$.
\begin{thm}[minimax upper bound]
\label{thm:upper-bound} Let $n=\msf O\left(r\left(m\vee p_{2}\right)\right)$.
Furthermore, suppose that $\mb{\varPsi}$ and $\mc W$ have sufficiently
small restricted isometry constants $\delta_{\tbullet,2k}\left(\mb{\varPsi}\right)$
and $\delta_{4r,\tbullet}\left(\mc W\right)$, respectively. Then,
for appropriately chosen constants $c_{1}$ and $c_{2}$, there exists
an absolute constant $C>0$ depending on $c_{1}$ and $c_{2}$ such
that the estimate produced using (\ref{eq:pre-estimate}) and (\ref{eq:estimate})
obeys 
\begin{align*}
\left\Vert \widehat{\mb X}-\mb X^{\star}\right\Vert _{F} & \leq C\sigma\sqrt{r\left(m\vee p_{2}\right)}
\end{align*}
for all $\mb X^{\star}\in\mbb X_{k,r}$ with high probability.
\end{thm}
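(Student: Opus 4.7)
The proof chains the error guarantees of the two stages. The plan is first to certify that with high probability the truth $\mb B^{\star}=\mb{\varPsi}\mb X^{\star}$ lies in the feasible set of the low-rank stage (\ref{eq:pre-estimate}), then to apply the nuclear-norm recovery guarantee (Lemma \ref{lem:pre-estimate}) to bound $\|\widehat{\mb B}-\mb B^{\star}\|_F$. This bound is then used to calibrate $c_2$ so that $\mb X^{\star}$ is feasible in the sparse stage (\ref{eq:estimate}), and a standard $\ell_{1,2}$-recovery argument under the restricted isometry of $\mb{\varPsi}$ converts feasibility into the desired Frobenius bound on $\widehat{\mb X}-\mb X^{\star}$.

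For stage one, the chi-squared tail bound already reproduced in the body implies that, on an event of high probability, $\|\mb z\|_2 \leq \sigma\sqrt{n + c_1 r(m\vee p_2)}$; since $\mc W(\mb B^{\star})-\mb y=-\mb z$ and $\mb B^{\star}$ has rank at most $r$, the matrix $\mb B^{\star}$ is feasible in (\ref{eq:pre-estimate}). Under the hypothesized bound on the relevant low-rank restricted isometry constant of $\mc W$, Lemma \ref{lem:pre-estimate} yields
\begin{align*}
\|\widehat{\mb B}-\mb B^{\star}\|_F & \leq C_1\,\sigma\sqrt{r(m\vee p_2)}
\end{align*}
for an absolute constant $C_1$. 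Choosing $c_2\geq C_1$ makes $\mb X^{\star}$ feasible in (\ref{eq:estimate}) because $\|\mb{\varPsi}\mb X^{\star}-\widehat{\mb B}\|_F = \|\mb B^{\star}-\widehat{\mb B}\|_F \leq c_2\sigma\sqrt{r(m\vee p_2)}$.

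For stage two, set $\mb H := \widehat{\mb X}-\mb X^{\star}$. The triangle inequality together with feasibility of both $\widehat{\mb X}$ and $\mb X^{\star}$ gives $\|\mb{\varPsi}\mb H\|_F \leq 2c_2\sigma\sqrt{r(m\vee p_2)}$. Writing $S$ for the row support of $\mb X^{\star}$ (with $|S|\leq k$) and viewing the $\ell_{1,2}$-norm as an $\ell_1$-norm on the vector of row $\ell_2$-norms, optimality of $\widehat{\mb X}$ yields the standard cone condition $\|\mb H_{S^c}\|_{1,2}\leq \|\mb H_S\|_{1,2}$. A tubular decomposition of $\mb H_{S^c}$ into successive row-blocks of size $k$ ordered by decreasing row $\ell_2$-norm then shows that $\mb H$ is well-approximated by a $2k$-row-sparse matrix, and the restricted isometry of $\mb{\varPsi}$ over $\mbb X_{2k,\tbullet}$ converts $\|\mb{\varPsi}\mb H\|_F$ into $\|\mb H\|_F$ up to constants, giving the claimed bound $\|\mb H\|_F\lesssim \sigma\sqrt{r(m\vee p_2)}$.

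The main subtlety I expect is the reduction of the second stage to a standard compressed-sensing analysis: because the optimization variable is a matrix and $\|\cdot\|_{1,2}$ aggregates rows, one must verify that the usual vector-$\ell_1$ arguments (cone condition, tubular decomposition, RIP application to blocks of $2k$ rows) carry through verbatim, and keep track of how the restricted isometry constants interact with $c_2$ so that $C$ remains absolute. This is largely mechanical once one observes that $\|\mb M\|_{1,2}$ acts as an $\ell_1$-norm on the vector of row norms and that $\mb{\varPsi}$ operates column-by-column, so the row-sparsity structure of $\mb H$ is preserved under $\mb{\varPsi}$. The first stage, by contrast, is a direct invocation of Lemma \ref{lem:pre-estimate}, so no additional work is needed beyond verifying feasibility of $\mb B^{\star}$.
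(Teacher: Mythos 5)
Your proposal is correct and follows essentially the same route as the paper: certify feasibility of $\mb B^{\star}=\mb{\varPsi}\mb X^{\star}$ via the chi-squared tail bound, invoke Lemma \ref{lem:pre-estimate} to bound $\lVert\widehat{\mb B}-\mb B^{\star}\rVert_F$ and hence make $\mb X^{\star}$ feasible in (\ref{eq:estimate}), then use the restricted isometry of $\mb{\varPsi}$ over $\mbb X_{2k,\tbullet}$ to convert feasibility into the Frobenius error bound. The only difference is that where you sketch the cone-condition and block-decomposition argument for the $\ell_{1,2}$ stage inline, the paper simply cites the block-sparse recovery guarantee of Eldar and Mishali, which is exactly the argument you outline.
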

The proof of Theorem \ref{thm:upper-bound} is straightforward and
provided in the appendix. If $\mb{\varPsi}$ and $\mc W$ are drawn
from certain ensembles of random matrices or operators (e.g., Gaussian,
Rademacher, partial Fourier, partial random circulant, etc.), then
with 
\begin{align*}
m & =\msf O\left(k\,\mr{polylog}\left(p_{1},k\right)\right) & \text{and} &  & n & =\msf O\left(r\left(k\vee p_{2}\right)\mr{polylog}\left(p_{1},p_{2},k,r\right)\right),
\end{align*}
where $\mr{polylog\left(\right)}$ denotes a polylogarithmic factor
of its argument, we can guarantee the desired restricted isometry
properties with high probability. Therefore, an immediate consequence
of Theorem \ref{thm:upper-bound} in these scenarios is that we can
guarantee estimation error of the order $\sqrt{r\left(k\vee p_{2}\right)\mr{polylog}\left(p_{1},p_{2},k,r\right)}$.
To have a concrete example, the case of Gaussian operators is addressed
by the following corollary.
\begin{cor}
\label{cor:Gaussian}Suppose that $\mb{\varPsi}$ has iid $\msf N\left(0,\frac{1}{m}\right)$
entries. Furthermore, suppose that $\mc W$ is a Gaussian operator
that simply takes the inner product of its argument with $n$ independent
Gaussian matrices each populated with iid $\msf N\left(0,\frac{1}{n}\right)$
entries. If $m=C_{1}k\log\frac{p_{1}}{k}$ and $n=C_{2}r\left(m\vee p_{2}\right)$
for sufficiently large absolute constants $C_{1}$ and $C_{2}$, then
there exists an absolute constant $C>0$ such that the estimate $\widehat{\mb X}$
obtained using (\ref{eq:pre-estimate}) and (\ref{eq:estimate}) satisfies
\begin{align*}
\left\Vert \widehat{\mb X}-\mb X^{\star}\right\Vert _{F} & \leq C\sigma\sqrt{r\left(k\log\frac{p_{1}}{k}\vee p_{2}\right)},
\end{align*}
for all $\mb X^{\star}\in\mbb X_{k,r}$ with high probability.\end{cor}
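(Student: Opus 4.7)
The plan is to reduce Corollary \ref{cor:Gaussian} to an application of Theorem \ref{thm:upper-bound}, so the whole proof amounts to verifying that the Gaussian draws of $\mb{\varPsi}$ and $\mc W$ supply the required restricted isometry constants with high probability once $m$ and $n$ meet the stated scalings.

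First I would handle $\mb{\varPsi}$. Since a restricted isometry over $\mbb X_{2k,\tbullet}$ is equivalent to a restricted isometry for $2k$-sparse vectors in $\mbb R^{p_1}$ (the column-wise action is identical), I can invoke the standard Gaussian RIP bound (e.g., Baraniuk--Davenport--DeVore--Wakin \cite{Baraniuk_Simple_2008}): if $\mb{\varPsi}$ has iid $\msf N(0,1/m)$ entries and $m \geq C_1 k \log(p_1/k)$ with $C_1$ large enough, then $\delta_{2k,\tbullet}(\mb{\varPsi})$ is at most any prescribed small constant, with probability $1 - \exp(-c m)$. This immediately matches the hypothesis on $\mb{\varPsi}$ in Theorem \ref{thm:upper-bound}.

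Next I would handle $\mc W$. The operator $\mc W$ described in the corollary is exactly the Gaussian ensemble on $\mbb R^{m \times p_2}$, acting by $\mc W(\mb B)_i = \langle \mb G_i, \mb B \rangle$ with iid normalized Gaussian matrices $\mb G_i$. For such ensembles, the standard low-rank RIP result of Candès--Plan \cite{Candes_Tight_2011} (or Recht--Fazel--Parrilo \cite{Recht_Guaranteed_2010}) asserts that $\delta_{\tbullet,4r}(\mc W)$ is at most any prescribed small constant provided $n \geq C_2 \cdot 4r(m + p_2)$ with $C_2$ sufficiently large, again with probability exponential in $n$. Since $m + p_2 \asymp m \vee p_2$, the hypothesis $n = C_2 r(m \vee p_2)$ suffices to produce a sufficiently small $\delta_{4r,\tbullet}(\mc W)$.

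With both RIP hypotheses of Theorem \ref{thm:upper-bound} in force (and $n = C_2 r(m \vee p_2) = \msf O(r(m \vee p_2))$, so the sample-complexity hypothesis of the theorem is also satisfied), the theorem yields
\begin{align*}
\bigl\| \widehat{\mb X} - \mb X^{\star} \bigr\|_F & \leq C\sigma \sqrt{r(m \vee p_2)}
\end{align*}
for every $\mb X^{\star} \in \mbb X_{k,r}$, on the intersection of the three high-probability events (RIP for $\mb{\varPsi}$, RIP for $\mc W$, and the chi-squared concentration underlying Theorem \ref{thm:upper-bound}). Substituting $m = C_1 k \log(p_1/k)$ absorbs the constant $C_1$ into $C$ and produces the advertised bound $C\sigma\sqrt{r(k\log(p_1/k) \vee p_2)}$. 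A union bound over the three events still leaves a probability of failure that decays exponentially in $\min(m, n)$, which is the "high probability" claimed.

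There is no real obstacle: the argument is entirely a plug-in. The only minor care point is to verify that the single draw of $\mb{\varPsi}$ and of $\mc W$ satisfies the deterministic RIP hypotheses that Theorem \ref{thm:upper-bound} needs \emph{uniformly} over $\mbb X_{k,r}$; but both RIP results cited above are precisely of this uniform form, so no additional net argument or conditioning is required.
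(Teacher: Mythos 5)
Your proposal is correct and follows essentially the same route as the paper: invoke the standard Gaussian RIP result of \cite{Baraniuk_Simple_2008} for $\mb{\varPsi}$ (noting that sparse-vector RIP transfers column-wise to $\mbb X_{2k,\tbullet}$), the low-rank RIP of \cite[Theorem 2.3]{Candes_Tight_2011} for $\mc W$, and then plug into Theorem \ref{thm:upper-bound} with $m=C_{1}k\log\frac{p_{1}}{k}$. Your explicit attention to needing rank-$4r$ RIP (rather than rank-$r$, as the paper's proof loosely states) and to the union bound over the high-probability events is a slightly more careful rendering of the same argument, with no substantive difference.
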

\begin{proof}
With $m=\msf O\left(k\log\frac{p_{1}}{k}\right)$ it can be shown
through the standard covering argument and the union bound that, with
high probability, the considered $\mb{\varPsi}$ would be a restricted
isometry over $2k$-sparse vectors in $\mbb R^{p_{1}}$ \cite{Baraniuk_Simple_2008}.
Clearly, in this case $\mb{\varPsi}$ is also a restricted isometry
over $\mbb X_{2k,\tbullet}$ as it preserves the $\ell_{2}$-norm
of the columns of any matrix in $\mbb X_{2k,\tbullet}$. Similarly,
$\mc W$ would be a restricted isometry over rank-$r$ matrices in
$\mbb R^{m\times p_{2}}$ if $n=\msf O\left(r\left(m\vee p_{2}\right)\right)$
\cite[Theorem 2.3]{Candes_Tight_2011}. Then, the desired accuracy
bound follows immediately from Theorem \ref{thm:upper-bound}.
\end{proof}

\paragraph{Weaker assumptions and non-uniform guarantees:}
The assumptions that the linear operator $\mc W$ and the
matrix $\mb{\varPsi}$ are restricted isometries are primarily used
to establish the minimax bounds that are valid uniformly for all target
matrices $\mb X^{\star}\in\mbb X_{k,r}$. However, these assumptions
are not generally needed if the goal is merely to show accuracy of
the proposed two-stage method for any particular instance of the problem.
For example, for certain random operators $\mc W$ and matrices $\mb{\varPsi}$
that cannot be restricted isometries, the accuracy of (\ref{eq:measurements})
and (\ref{eq:pre-estimate}) can be shown by construction of a \emph{dual
certificate} through the \emph{golfing scheme} \cite{Gross_Recovering_2011}.
In these regimes, however, the robustness to noise is often weaker.

\paragraph{Low-rank and column-sparse matrices:}
 If the target low-rank matrices are column-sparse rather than row-wise sparse our results still apply with minor adjustments. if the columns of a low-rank matrix are $k$-sparse, but not necessarily supported on the same rows, the restricted isometry of $\mb{\varPsi}$ still holds. Therefore, we can follow a similar algorithm in which the $\ell_{1,2}$ norm in the second stage is replaced by another norm such as the maximum column-wise $\ell_1$  norm. The lower bound still holds as well since row-wise sparse matrices are a special instances of column-sparse matrices.

\subsection{The minimax lower bound}
The following theorem provides a minimax lower bound for the probability
of estimation failure over $\mbb X_{k,r}$. 
\begin{thm}[minimax lower bound]
\label{thm:lower-bound} Suppose that $\mc A$ obeys (\ref{eq:A-bounded}).
Let $\widehat{\mb X}$ denote any estimator of $\mb X^{\star}$ based
on the measurements of the form (\ref{eq:measurements}). Then, there
exists a sufficiently small absolute constant $c>0$ such that with
a  probability more than one half the estimation error over $\mbb X_{k,r}$
exceeds $c\sigma\sqrt{\frac{k\log\frac{p_{1}}{k}+r\left(k\vee p_{2}\right)}{\gamma_{k,r}}}$.
Namely, we have the minimax lower bound 
\begin{align*}
\inf_{\widehat{\mb X}}\sup_{\mb X^{\star}\in\mbb X_{k,r}}\P\left(\left\Vert \widehat{\mb X}-\mb X^{\star}\right\Vert _{F}\geq c\sigma\sqrt{\frac{k\log\frac{p_{1}}{k}+r\left(k\vee p_{2}\right)}{\gamma_{k,r}}}\right) & >\frac{1}{2}.
\end{align*}
\end{thm}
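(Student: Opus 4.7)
The plan is to reduce the estimation problem to a multi-way hypothesis testing problem via the standard Fano method. Since $\mb y \mid \mb X \sim \msf N\left(\mc A\left(\mb X\right),\sigma^{2}\mb I\right)$, for any two candidates $\mb X_{i},\mb X_{j}\in\mbb X_{k,r}$ the KL divergence between the induced measurement distributions is $\frac{1}{2\sigma^{2}}\left\Vert \mc A\left(\mb X_{i}-\mb X_{j}\right)\right\Vert _{2}^{2}$. Because $\mb X_{i}-\mb X_{j}\in\mbb X_{2k,2r}$, the boundedness assumption (\ref{eq:A-bounded}) (applied with $(2k,2r)$, which can be absorbed into constants) yields $\mr{KL}\leq \frac{\gamma_{k,r}}{2\sigma^{2}}\left\Vert \mb X_{i}-\mb X_{j}\right\Vert _{F}^{2}$. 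Fano's inequality then asserts that if $\left\{\mb X_{1},\ldots,\mb X_{N}\right\}\subseteq\mbb X_{k,r}$ is a $2\tau$-packing in Frobenius distance and $\max_{i,j}\mr{KL}\leq \frac{1}{4}\log N$, then the minimax probability of exceeding $\tau$ is at least $\frac12$. Achieving the two terms of the advertised bound amounts to constructing two separate packings, one capturing the row-sparsity complexity and one capturing the low-rank complexity, and then taking the larger of the two resulting lower bounds (using $\sqrt{a+b}\asymp\sqrt{a}\vee\sqrt{b}$).

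For the first term $\sigma\sqrt{k\log(p_{1}/k)/\gamma_{k,r}}$, I would fix a unit vector $\mb v\in\mbb R^{p_{2}}$ and, for each $k$-subset $S\subseteq[p_{1}]$, build the rank-one matrix $\mb X_{S}=\varepsilon_{1}\sum_{i\in S}\mb e_{i}\mb v^{\msf T}$, which lies in $\mbb X_{k,1}\subseteq\mbb X_{k,r}$. By a standard Gilbert--Varshamov-type argument (e.g.\ using the Plotkin-style selection among $k$-sparse binary vectors, or combining sign patterns $\mb s\in\{-1,+1\}^k$ with a few supports), one obtains a family of size $\log N_{1}\gtrsim k\log(p_{1}/k)$ whose members are pairwise separated in Hamming distance by at least $k/2$. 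The corresponding Frobenius separations satisfy $\left\Vert \mb X_{i}-\mb X_{j}\right\Vert _{F}^{2}\asymp\varepsilon_{1}^{2}k$, which also controls the maximum pairwise distance. Balancing so that $\frac{\gamma_{k,r}}{\sigma^{2}}\varepsilon_{1}^{2}k\lesssim k\log(p_{1}/k)$ forces $\varepsilon_{1}\asymp\sigma\sqrt{\log(p_{1}/k)/\gamma_{k,r}}$, and the resulting packing radius is $\tau_{1}\asymp\sigma\sqrt{k\log(p_{1}/k)/\gamma_{k,r}}$.

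For the second term $\sigma\sqrt{r(k\vee p_{2})/\gamma_{k,r}}$, I would fix the support of the rows to be $[k]$ and invoke the standard packing of rank-$r$ matrices in $\mbb R^{k\times p_{2}}$. More precisely, using a Varshamov--Gilbert packing of the Grassmannian of $r$-dimensional subspaces combined with a packing of the diagonal factor (as in \cite{Rohde_Estimation_2011} or \cite[Lemma 3.1]{Candes_Tight_2011}), one obtains a set of rank-$r$ matrices in $\mbb R^{k\times p_{2}}$, viewed as elements of $\mbb X_{k,r}$ by zero-padding, with $\log N_{2}\gtrsim r(k\vee p_{2})$, pairwise separation $\gtrsim\varepsilon_{2}$, and common Frobenius norm $\lesssim\varepsilon_{2}$. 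Feeding this into Fano and balancing $\frac{\gamma_{k,r}}{\sigma^{2}}\varepsilon_{2}^{2}\lesssim r(k\vee p_{2})$ gives $\tau_{2}\asymp\sigma\sqrt{r(k\vee p_{2})/\gamma_{k,r}}$. Combining, the larger of $\tau_{1}$ and $\tau_{2}$ yields the announced bound up to an absolute constant.

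The main obstacle is the low-rank packing step: one must simultaneously pack the subspace choice and the coefficients so that both $r\cdot k$ and $r\cdot p_{2}$ are realized as the log-cardinality while keeping all matrices of comparable Frobenius norm. This is delicate but entirely standard, and I would cite the existing lemmas in \cite{Candes_Tight_2011} or \cite{Rohde_Estimation_2011} rather than redoing the construction. Everything else is routine: checking that the constructed matrices genuinely lie in $\mbb X_{k,r}$, applying the Gaussian KL identity, and verifying the Fano balance condition with the constants absorbed into the final $c$.
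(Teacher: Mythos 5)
Your proposal is correct in outline and uses the same high-level machinery as the paper (reduction to multi-way testing, a Fano-type inequality as in Theorem \ref{thm:Fano}, and Varshamov--Gilbert packings as in Lemma \ref{lem:VG}), but the hypothesis sets are built differently. The paper constructs two self-contained ``hybrid'' families inside $\mbb X_{k,r}$: block-replicated $\pm1$ matrices whose supports vary over a VG-separated family of $k$-subsets, so that a single application of Fano per family directly yields the combined exponents $k\log\frac{p_{1}}{k}+rp_{2}$ and $k\log\frac{p_{1}}{k}+rk$, after which the two bounds are maximized. You instead split the bound into a pure sparsity packing (rank-one matrices with varying support, giving $k\log\frac{p_{1}}{k}$) and a pure low-rank packing on a fixed support $[k]$ whose existence you outsource to \cite{Rohde_Estimation_2011} or \cite{Candes_Tight_2011}, and then take the maximum using $\sqrt{a+b}\asymp\sqrt{a}\vee\sqrt{b}$; this is equally valid and arguably more modular, at the cost of relying on external packing lemmas where the paper's replicated-sign construction keeps everything elementary and verifiably inside $\mbb X_{k,r}$. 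One step you should tighten: you bound the pairwise KL divergences by invoking (\ref{eq:A-bounded}) ``with $(2k,2r)$,'' but the theorem only assumes (\ref{eq:A-bounded}) on $\mbb X_{k,r}$, and a constant $\gamma_{2k,2r}$ is neither assumed nor automatically comparable to $\gamma_{k,r}$, so it cannot simply be absorbed. This is easily repaired either by the triangle inequality, $\left\Vert \mc A\left(\mb X_{i}-\mb X_{j}\right)\right\Vert _{2}\leq\sqrt{\gamma_{k,r}}\left(\left\Vert \mb X_{i}\right\Vert _{F}+\left\Vert \mb X_{j}\right\Vert _{F}\right)$, since all your hypotheses have Frobenius norm of order $\varepsilon$, or, as the paper does, by measuring all KL divergences against the reference $\P_{\mb 0}$ (the zero matrix), in which case only $\left\Vert \mc A\left(\mb X\right)\right\Vert _{2}^{2}\leq\gamma_{k,r}\left\Vert \mb X\right\Vert _{F}^{2}$ for $\mb X\in\mbb X_{k,r}$ is ever needed; with either fix your argument goes through and recovers the stated lower bound up to absolute constants.
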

In words, Theorem \ref{thm:lower-bound} shows that if $\gamma_{k,r}=\msf O\left(1\right)$,
then the error of any estimator of the matrices in $\mbb X_{k,r}$
cannot be uniformly better than $\msf O\left(\sigma\sqrt{k\log\frac{p_{1}}{k}+r\left(k\vee p_{2}\right)}\right)$
with high probability. For instance, this result implies that the
bound established in Corollary \ref{cor:Gaussian} is near-optimal
as the error bound is within $\log\left(\frac{p_{1}}{k}\right)$ factor
of the lower bound. We would like to emphasize that, we did not assume
the nested structure (\ref{eq:nested-measurements}) for $\mc A$
to prove the lower bound. Therefore, the lower bound  applies to any
linear operator $\mc A$ obeying (\ref{eq:A-bounded}) regardless
of whether it is of the form (\ref{eq:nested-measurements}) or not.

\section{Applications and Extensions}
\subsection{Blind deconvolution with randomly coded masks}
The linear measurements of the form (\ref{eq:nested-measurements})
appear in some interesting and important applications in computational
imaging. For example, (\ref{eq:nested-measurements}) can describe
the measurements in the lifted formulation of blind deconvolution
problems that arise in imaging systems with randomly coded masks \cite{Tang_Convex_2014,Bahmani_Lifting_2014}.
In these problems rank-one target matrix is $\mb X=\mb u\mb h^{*}$
with $\mb u$ and $\mb h$ being the target image and the Discrete
Fourier Transform (DFT) of the unknown blurring kernel, respectively.
The blind deconvolution can be solved effectively, by estimating the
matrix $\mb X$ from the linear measurements of the form 
\begin{align*}
\mc A\left(\mb X\right) & =\mb{\varPhi}^{*}\left(\mb X\circ\mb F\right).
\end{align*}
where $\mb F$ denotes the DFT matrix, $\circ$ denotes the Hadamard
product, and $\mb{\varPhi}$ is a matrix whose columns model the random
masks that modulate the image $\mb u$. We assume that the image $\mb u$
is sparse with respect to an orthonormal basis that is \emph{incoherent
}with the canonical basis. This assumption is realistic because, if
necessary, spectral modulation can always create an effective sparsifying
basis for the target image that has the desired incoherence condition.
To simplify the exposition, let us assume that the image is sparse
in the DFT basis and we have $\mb u=\mb F\tilde{\mb u}$ with $\widetilde{\mb u}$
being a $k$-sparse vector. Therefore, instead of $\mb X$ one may
consider $\widetilde{\mb X}=\widetilde{\mb u}\mb h^{*}$ as the target
rank-one matrix which is also row-wise sparse. Now if the random masks
$\mb{\varPhi}$ are designed to suppress all but a small number of
randomly chosen pixels indexed by $\varOmega$, then the measurement
model reduces to 
\begin{align*}
\widetilde{\mc A}\left(\widetilde{\mb X}\right) & =\mb{\varPhi}_{\Omega}^{*}\left(\mb F_{\Omega}\widetilde{\mb X}\circ\mb F_{\varOmega}\right),
\end{align*}
where the subscript $\varOmega$ denotes restriction to the rows indexed
by $\varOmega$. It is now clear that with $\mb{\varPsi}=\mb F_{\varOmega}$
and $\mc W:\mb B\mapsto\msf{vec}\left(\mb{\varPhi}_{\Omega}^{*}\left(\mb B\circ\mb F_{\varOmega}\right)\right)$,
the above equation is a special case of (\ref{eq:nested-measurements})
up to a trivial vectorization.

\subsection{Low-rank and doubly-sparse matrices}
In this paper, we chose to state the main theoretical results
only for the low-rank and row-wise sparse model mentioned in Section
\ref{sub:Problem-setup}. However, these results can also be easily
extended to the case of low-rank and doubly sparse matrices, where
the target matrix is sparse both row-wise and column-wise. These kinds of matrices occur, for instance, in compressive phase retrieval, elaborated on in the following subsection, and \emph{covariance sketching} \cite{Bahmani_Sketching_2015}. The appropriate
nested form of the measurement operator for these problems is 
\begin{align}
\mc A\left(\mb X\right) & =\mc W\left(\mb{\varPsi}_{1}\mb X\mb{\varPsi}_{2}^{*}\right),\label{eq:doubly-sparse}
\end{align}
 with $\mc W$, $\mb{\varPsi}_{1}$, and $\mb{\varPsi}_{2}$ given.  
The only modification needed for the estimator would be to use $\mb{\varPsi}_{1}$
and perform the first sparse estimation stage for row-wise sparsity
as before, and then use $\mb{\varPsi}_{2}$ to perform a second sparse
estimation stage for column-wise sparsity.

\subsection{Compressive phase retrieval}\label{ssec:CPR}
An interesting special case of the extension described above
by (\ref{eq:doubly-sparse}) is when $\mb{\varPsi}_{1}=\mb{\varPsi}_{2}$
and the linear operator $\mc W$ measures inner product of its argument
and some rank-one matrices $\mb w_{i}\mb w_{i}^{*}$ (i.e., $\mc W:\mb B\mapsto\left[\left\langle \mb w_{i}\mb w_{i}^{*},\mb B\right\rangle \right]_{i=1}^{n}$).
In particular, this model can be used in Compressive Phase Retrieval
(CPR) \cite{Moravec_Compressive_2007,Shechtman_Sparsity_2011} where
the target matrix $\mb X^{\star}=\mb x\mb x^{*}$ with a sparse $\mb x\in\mbb C^{p}$
is estimated from measurements $\left[\left|\left\langle \mb a_{i},\mb x\right\rangle \right|^{2}\right]_{i=1}^{n}$.
Therefore, the measurement operator in CPR can be written as $\mc A:\mb X\mapsto\left[\left\langle \mb a_{i}\mb a_{i}^{*},\mb X\right\rangle \right]_{i=1}^{n}$.
The CPR problem is posed as non-convex optimization problems in \cite{Moravec_Compressive_2007,Shechtman_Sparsity_2011,Schechtman_GESPAR_2014}
where the sparsity of the solution is minimized subject to a constraint
on the quartic prediction error or vice versa. While certain local
convergence guarantees are established for the GESPAR algorithm \cite{Schechtman_GESPAR_2014},
global convergence and statistical accuracy of the proposed algorithm
remains unknown. In \cite{Netrapalli_Phase_2013}, another non-convex
approach based on alternating minimization is proposed for the standard
Phase Retrieval (PR) as well as the CPR. With the particular initialization
proposed in \cite{Netrapalli_Phase_2013}, the alternating minimization
method is shown to converge linearly in the noise-free PR and CPR
problems. However, this convergence rate holds for CPR when the number
of measurements grows quadratically in $k$. For large-scale problems
this alternating minimization method would still be a favorite choice
among the competing algorithms as it is computationally less demanding.
Furthermore, assuming iid Gaussian measurement vectors $\mb a_{i}$,
\cite{Ohlsson_CPRL_2012} and \cite{Li_Sparse_2013} consider a convex
relaxation to the lifted CPR problem formulated as 
\begin{align*}
\begin{aligned} & \argmin_{\mb X\succcurlyeq\mb 0} &  & \msf{trace}\left(\mb X\right)+\lambda\left\Vert \mb X\right\Vert _{1}\\
 & \text{subject to} &  & \mc A\left(\mb X\right)=\mb y,
\end{aligned}
\end{align*}
for some parameter $\lambda\geq0$. For the considered type of measurements,
\cite{Li_Sparse_2013} shows that the above convex program can recover
the target sparse and rank-one matrix $\mb X^{\star}$ if the number
of measurements depend quadratically on $k$, the number of the nonzeros
of the signal $\mb x$. The same result is shown in \cite{Chen_Exact_2015}
for sub-Gaussian measurements and through a simpler derivation by
``debiasing'' the measurements and showing the obtained operator
obeys an $\ell_{1}$/$\ell_{2}$ variant of the restricted isometry
property. Furthermore, \cite{Li_Sparse_2013} demonstrates that the
quadratic dependence on $k$ cannot be improved fundamentally by varying
the coefficient $\lambda$ which is in agreement with the results
of \cite{Oymak_Simultaneously_2015}.

 As mentioned above, if we choose $\mb a_{i}=\mb{\varPsi}^{*}\mb w_{i}$ for some compressive
sensing matrix $\mb{\varPsi}$, and random vectors $\left\{ \mb w_{i}\right\} _{i=1}^{n}$,
then the measurement operator takes the nested form $\mc A:\mb X\mapsto\left[\left\langle \mb w_{i}\mb w_{i}^{*},\mb{\varPsi}\mb X\mb{\varPsi}^{*}\right\rangle \right]_{i=1}^{n}$. It is then straightforward to apply the framework developed in this
paper and show that $\mb X^{\star}$ can be reconstructed accurately
and efficiently from $n\ll p$ measurements obtained by $\mc A$  through the two-stage recovery
\begin{align}
\widehat{\mb B} & \in\argmin_{\mb B\succcurlyeq\mb{0}} \msf{trace}\left(\mb B\right)\label{eq:cpr-lr}\\
 & \text{subject to }\sum_{i=1}^n \left(\mb{w}_i^* \mb B\mb{w}_i-y_i \right)^2\leq \varepsilon ^2\nonumber \\
\widehat{\mb X} & \in\argmin_{\mb X}\left\Vert \mb X\right\Vert _1\label{eq:cpr-sr}\\
 & \text{subject to }\left\Vert \mb{\varPsi}\mb X\mb{\varPsi}^*-\widehat{\mb B}\right\Vert _{F}\leq \frac{C\varepsilon}{\sqrt{n}}\nonumber ,
\end{align}
where $\varepsilon \ge \left\Vert \mb{z}\right\Vert_2$ is a bound on the noise, $\left\Vert\cdot\right\Vert_1$ denotes the $\ell_1$ norm, and $C$ is an absolute constant. The precise guarantees for effectiveness of nested measurements in CPR are established independently in {\cite{Iwen_Robust_2015}}, \cite{Yapar_Fast_2015}, and {\cite{Bahmani_Efficient_2015}}. In particular, the following is shown in {\cite{Bahmani_Efficient_2015}}.
\begin{thm}[{\citep[Corollary 1]{Bahmani_Efficient_2015}}]
Let $\mb{\varPsi}\in\mbb{R}^{m\times p}$ be a matrix with independent $\msf{N}(0,\frac{1}{m})$ entries, and for $i=1,2,\dotsc,n$ let $\mb{w}_i\in\mbb{R}^m$ be independent copies of a vector with i.i.d. standard Gaussian entries. For an arbitrary $k$-sparse signal $\mb{x}\in\mbb{R}^p$ we observe measurements  of the form $y_i = \langle \mb{\varPsi}^\msf{T}\mb{w}_i, \mb{x}\rangle^2 + z_i$ . If $m\ge c_1 k\log\frac{p}{k}$ and $n\ge c_2 m$ for sufficiently large absolute constant $c_1$ and $c_2$, then the two-stage recovery through \eqref{eq:cpr-lr} and \eqref{eq:cpr-sr} produces an estimate $\widehat{\mb{X}}$ that obeys\
\begin{align*}
\left\Vert\widehat{\mb{X}}-\mb{x}\mb{x}^\msf{T}\right\Vert_F &\le \frac{C'\varepsilon}{\sqrt{n}},
\end{align*}
with high probability for some absolute constant $C'>0$.
\end{thm}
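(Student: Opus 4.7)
The plan is to decouple the two stages and handle each using standard tools: a PhaseLift-style analysis for the trace-norm minimization in \eqref{eq:cpr-lr}, followed by a sparse recovery analysis for the $\ell_1$ minimization in \eqref{eq:cpr-sr}. First I would rewrite the measurements in the lifted form
\begin{align*}
y_i & = \langle \mb{\varPsi}^{\msf T}\mb w_i,\mb x\rangle^2 + z_i = \mb w_i^{\msf T}\mb{\varPsi}\mb x\mb x^{\msf T}\mb{\varPsi}^{\msf T}\mb w_i + z_i = \langle \mb w_i\mb w_i^{\msf T},\mb B^{\star}\rangle + z_i,
\end{align*}
where $\mb B^{\star} = \mb u^{\star}(\mb u^{\star})^{\msf T}$ with $\mb u^{\star} = \mb{\varPsi}\mb x \in \mbb R^m$. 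Thus conditional on $\mb{\varPsi}$, the first stage is a standard PhaseLift problem with independent Gaussian measurement vectors $\mb w_i$ and a fixed rank-one PSD target $\mb B^{\star}$ in $\mbb R^{m\times m}$. Since the $\mb w_i$ are independent of $\mb{\varPsi}$, conditioning is harmless.

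For stage one I would invoke the Cand\`es--Strohmer--Voroninski style recovery guarantee: as soon as $n \gtrsim m$, the constrained trace minimization in \eqref{eq:cpr-lr} produces an estimate $\widehat{\mb B}$ with $\|\widehat{\mb B}-\mb B^{\star}\|_F \lesssim \varepsilon/\sqrt{n}$ with high probability. Feasibility of $\mb B^{\star}$ follows because the noise obeys $\sum_i z_i^2 \le \|\mb z\|_2^2 \le \varepsilon^2$. The constant $C$ appearing in the tube radius of \eqref{eq:cpr-sr} can then be chosen so that $\|\mb{\varPsi}\mb X^{\star}\mb{\varPsi}^{\msf T}-\widehat{\mb B}\|_F = \|\mb B^{\star}-\widehat{\mb B}\|_F \le C\varepsilon/\sqrt{n}$, ensuring that $\mb X^{\star} = \mb x\mb x^{\msf T}$ is feasible in the second stage.

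For stage two the error matrix $\mb H = \widehat{\mb X} - \mb X^{\star}$ satisfies $\|\mb{\varPsi}\mb H\mb{\varPsi}^{\msf T}\|_F \le 2C\varepsilon/\sqrt{n}$ together with the usual $\ell_1$ cone constraint derived from the optimality $\|\widehat{\mb X}\|_1 \le \|\mb X^{\star}\|_1$. The key technical step is a doubly-sparse isometry inequality: for $m \gtrsim k\log(p/k)$ the matrix $\mb{\varPsi}$ is a restricted isometry for $2k$-sparse vectors, and this lifts to
\begin{align*}
\|\mb{\varPsi}\mb M\mb{\varPsi}^{\msf T}\|_F^2 & \approx \|\mb M\|_F^2
\end{align*}
uniformly for matrices $\mb M$ supported on a $2k\times 2k$ block, which one sees by applying $\mb{\varPsi}_S^{\msf T}\mb{\varPsi}_S \approx \mb I$ column-wise and then row-wise. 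Plugging this RIP-for-doubly-sparse-matrices into the standard $\ell_1$ analysis (a Cand\`es-Romberg-Tao style argument applied to the vectorization of $\mb X$, whose support lies in a $k\times k$ grid) yields $\|\mb H\|_F \lesssim \varepsilon/\sqrt{n}$, which is the desired bound.

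The main obstacle is the second stage, because $\mb M \mapsto \mb{\varPsi}\mb M\mb{\varPsi}^{\msf T}$ corresponds to the Kronecker operator $\mb{\varPsi}\otimes\mb{\varPsi}$, which does not satisfy the ordinary RIP on arbitrary $2k^2$-sparse vectorizations. The fix is to exploit the row/column-sparse structure of $\mb x\mb x^{\msf T}$ rather than its generic $k^2$-sparsity, and to carry out the $\ell_1$ cone argument directly on matrices whose supports lie in a $k\times k$ block. Once this structured isometry is in hand, combining it with the stage-one bound $\|\widehat{\mb B}-\mb B^{\star}\|_F \lesssim \varepsilon/\sqrt{n}$ through the triangle inequality produces the claimed rate $\|\widehat{\mb X}-\mb x\mb x^{\msf T}\|_F \le C'\varepsilon/\sqrt{n}$.
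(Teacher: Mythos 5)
This statement is imported from \cite{Bahmani_Efficient_2015} (its Corollary 1); the present paper does not prove it, so your proposal can only be measured against the route that reference and the surrounding discussion in Section 3.3 indicate. Your stage-one plan is consistent with that route and is essentially sound: since $\mc W:\mb B\mapsto\left[\left\langle \mb w_{i}\mb w_{i}^{\msf T},\mb B\right\rangle \right]_{i=1}^{n}$ is not a restricted isometry, the paper itself notes that the low-rank stage must be handled by PhaseLift-type instance guarantees (dual certificates or the small-ball method), and conditioning on $\mb{\varPsi}$, viewing $\mb B^{\star}=\mb{\varPsi}\mb x\mb x^{\msf T}\mb{\varPsi}^{\msf T}$ as a fixed rank-one PSD target measured by independent Gaussian vectors $\mb w_i$, and invoking a Cand\`es--Li type stability bound with $n\gtrsim m$ is the right move; feasibility of $\mb B^{\star}$ from $\left\Vert \mb z\right\Vert _{2}\leq\varepsilon$ is also correct.

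The genuine gap is in stage two. You propose to plug a ``RIP for $2k\times 2k$-block-supported matrices'' into the standard Cand\`es--Romberg--Tao cone argument, but that argument needs more than near-isometry on block supports: after the cone condition, the off-support part of the error $\mb H=\widehat{\mb X}-\mb x\mb x^{\msf T}$ is decomposed into chunks whose entry supports are arbitrary and unstructured, and one must control $\left\Vert \mb{\varPsi}\mb H_{T_{j}}\mb{\varPsi}^{\msf T}\right\Vert _{F}$ and the associated cross terms for those chunks. The operator $\mb M\mapsto\mb{\varPsi}\mb M\mb{\varPsi}^{\msf T}$, i.e.\ $\mb{\varPsi}\otimes\mb{\varPsi}$ acting on vectorizations, has no such property when $m\asymp k\log\frac{p}{k}$: its null space contains matrices with as few as $m+1$ nonzero entries (take $\mb m\mb e_{j}^{\msf T}$ with $\mb m\in\ker\mb{\varPsi}$ of minimal support), so even chunks far smaller than $k^{2}$ can be annihilated, and the ordinary RIP machinery collapses. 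Declaring that one should ``carry out the cone argument directly on matrices supported on a $k\times k$ block'' does not fix this, because the minimizer's error matrix is not block-supported; what is actually required is a robust null-space-type property of $\mb{\varPsi}\otimes\mb{\varPsi}$ relative to $k\times k$ block supports --- equivalently, control of cone-constrained errors whose measurement-null component is a sum of a matrix with columns in $\ker\mb{\varPsi}$ and a matrix with rows in $\ker\mb{\varPsi}$ --- and establishing that is precisely the nontrivial content of the cited corollary. It does not follow from applying $\mb{\varPsi}_{S}^{\msf T}\mb{\varPsi}_{S}\approx\mb I$ column-wise and then row-wise, which only yields the isometry on block-supported matrices. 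As written, your second stage names the obstacle but does not supply the idea that overcomes it.
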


The fact that the intermediate operator $\mc W:\mb B\mapsto\left[\left\langle \mb w_{i}\mb w_{i}^{*},\mb B\right\rangle \right]_{i=1}^{n}$
is generally not a restricted isometry precludes our minimax analysis.
However, as mentioned in Section \ref{sub:Contribution}, the restricted
isometry conditions are used merely to obtain uniform accuracy guarantees.
The desired uniform accuracy guarantees can be established through
other approaches, as done in \cite{Kueng_Low-rank_2014} using
the \emph{small-ball method }\cite{Mendelson_Learning_2014}. Moreover,
if we disregard uniform guarantees and thus minimax optimality, instance
accuracy guarantees of the low-rank estimation stage also can be established
through the small-ball method \cite{Tropp_Convex_2014} or the construction of a dual certificate \cite{Candes_PhaseLift_2013}. Therefore, we can show that the CPR problem can be solved in our proposed
framework accurately with a sample complexity that grows much slower than the ambient dimension of the target signal.

\section{Numerical Experiment}

We performed a set of simulations on synthetic data in which both
$\mb{\varPsi}$ and $\mc W$ are considered to be Gaussian as in Corollary
\ref{cor:Gaussian} with $m=\left\lceil 5k\log\frac{p_{1}}{k}\right\rceil $
and $n=4r\left(m\vee p_{2}\right)$. We generated target matrices
of size $1000\times30$ (i.e., $p_{1}=1000$ and $p_{2}=30$) that
are factored as $\mb X^{\star}=\mb U\mb V^{\msf T}$ where $\mb U\in\mbb R^{p_{1}\times r}$
whose $k$ nonzero rows are chosen uniformly at random and $\mb V\in\mbb R^{p_{2}\times r}$.
The nonzero entries of both $\mb U$ and $\mb V$ are independent
draws from the standard Gaussian distribution. The noise variance
is fixed at $\sigma^{2}=10^{-4}$. The rank of the target matrix is
selected from the range $1\leq r\leq10$. Similarly, the row-wise
sparsity is selected from the range $10\leq k\leq19$. For each pair
of $\left(r,k\right)$ the algorithm is tested for $100$ trials.
We used the TFOCS package \cite{Becker_Templates_2011} for the low-rank
estimation stage (\ref{eq:pre-estimate}). We also used a variant
of the Alternating Direction Method of Multipliers (ADMM) adapted
from \cite{Yang_Alternating_2011} for the sparse estimation stage
(\ref{eq:estimate}). Figure \ref{fig:NEvsRK} illustrates the variation
of the empirical median of $\frac{1}{\sigma^{2}}\left\Vert \widehat{\mb X}-\mb X^{\star}\right\Vert _{F}^{2}$
(i.e., the normalized squared reconstruction error) versus $k$ and
$r$. The spread around each data point indicates the distribution
of the normalized squared reconstruction error. As can be seen from
the figure, $\frac{1}{\sigma^{2}}\left\Vert \widehat{\mb X}-\mb X^{\star}\right\Vert _{F}^{2}$
grows almost linearly with respect to $r$ and $k$ if one of them
is fixed which is in agreement with the theoretical results.
\begin{figure}
\centering
\noindent\subfloat[$\frac{1}{\sigma^{2}}\left\Vert \widehat{\protect\mb X}-\protect\mb X^{\star}\right\Vert _{F}^{2}$
vs. $r$ for $k\in\left\{ 10,14,18\right\} $ ]{\protect\includegraphics[width=0.75\textwidth]{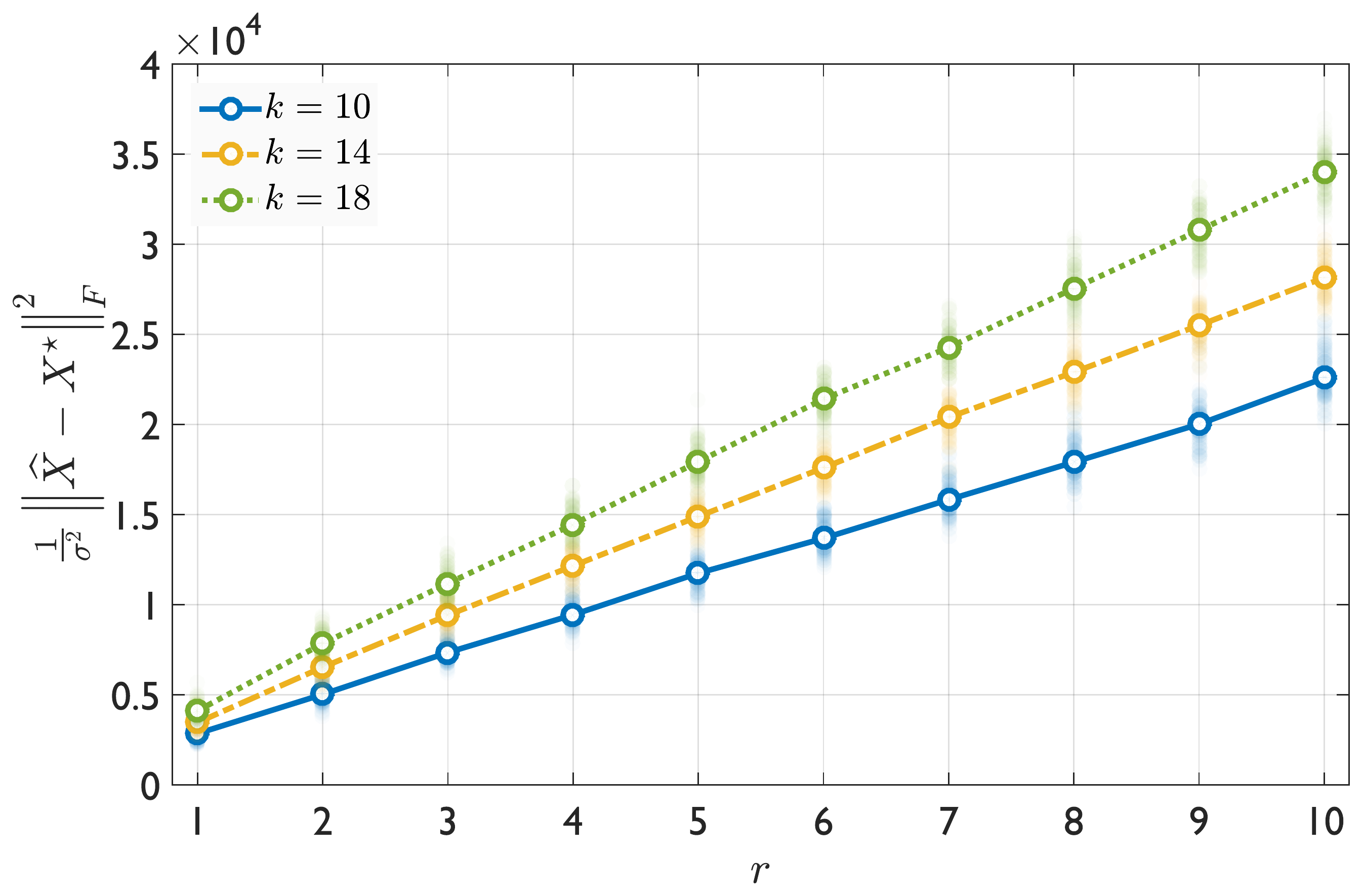}}

\centering
\noindent\subfloat[$\frac{1}{\sigma^{2}}\left\Vert \widehat{\protect\mb X}-\protect\mb X^{\star}\right\Vert _{F}^{2}$
vs. $k$ for $r\in\left\{ 2,6,10\right\} $ ]{\protect\includegraphics[width=0.75\textwidth]{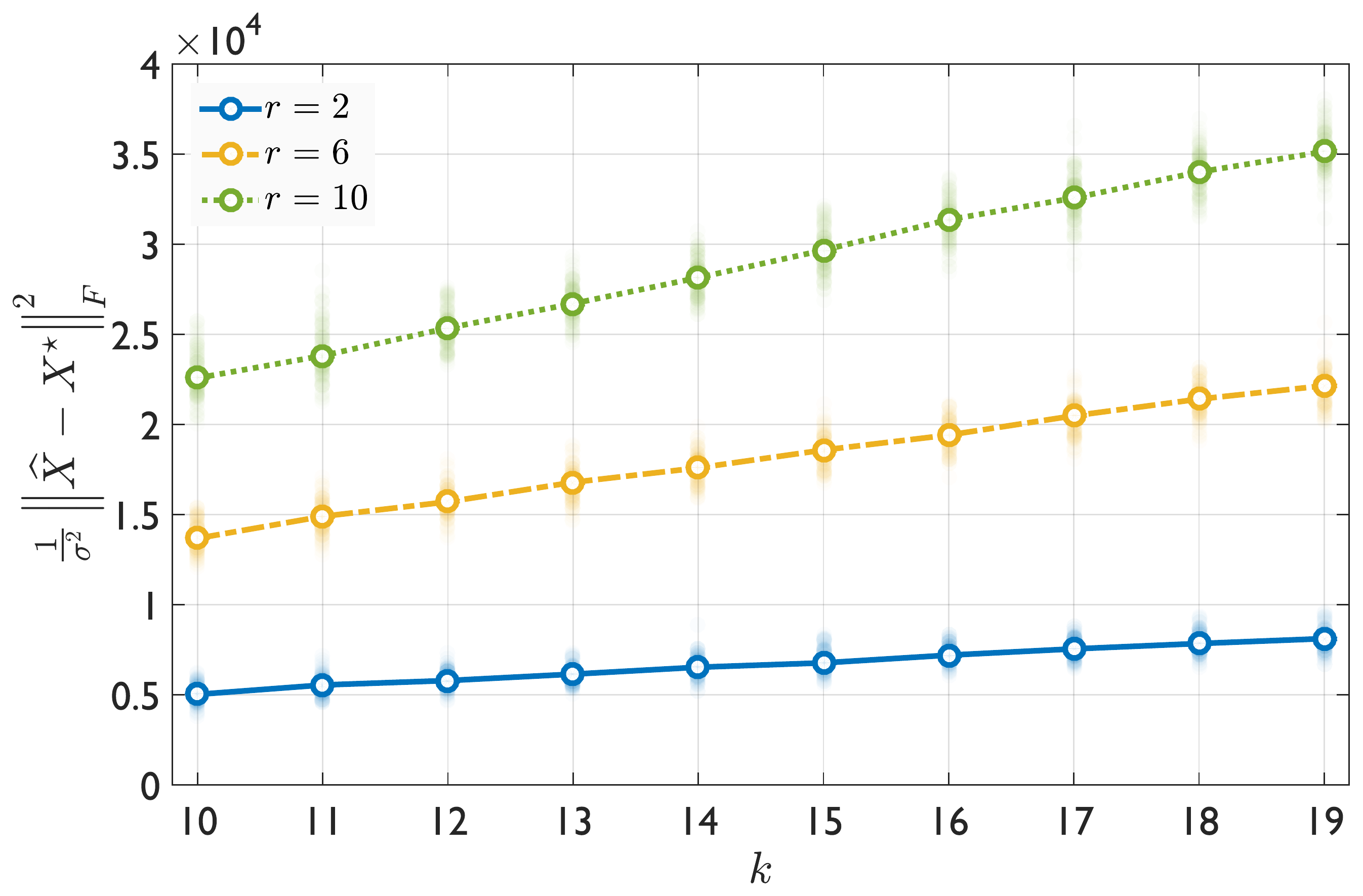}}\protect\caption{\label{fig:NEvsRK}Normalized squared reconstruction error for different
values of rank (i.e., $r$) and row-wise sparsity (i.e., $k$)}
\end{figure}

We also ran a simulation for the CPR described in Section \ref{ssec:CPR} on a two-dimensional image of Saturn.\footnote{Adapted from NASA's Voyager 2 image, 1981-08-24,  NASA catalog \#PIA01364} The target image has  size $1,280\times 1,024$, thus the ambient dimension is $p = 1,280\cdot 1,024= 1,310,720$. The target is \emph{compressible} in the wavelet domain. In particular, with eight levels of two-dimensional 30-tap Coiflet wavelets \cite{Daubechies_Ten_1992} we considered the sparsity of about $k=3,000$. The goal is to estimate the sparse/compressible wavelet coefficients of the target from the phaseless measurements of the form $\mb{y}=\left|\mb{W}\mb{\varPsi}\mb{x}\right|^2 $ where the modulus is taken entrywise. Here $\mb{x}$ represents the vectorized target image, $\mb{\varPsi}$ effectively performs a Rademacher modulation followed by $m=\lceil 2k(1+\log\frac{p}{k})\rceil=42,479$ random Fourier measurement, and $\mb{W}$ makes Fourier measurements at $m$ randomly chosen frequencies for each of $C=20$ masks with \emph{octanary} patterns described in \cite{Candes_Phase_2014}. Therefore, the matrix $\mb{\varPsi}$ has $m=42,479$ rows, and the total number of measurements (i.e., the number of rows in $\mb{W}$) is $n=Cm= 849,580$. We compared the performance of our two-stage approach with the estimate produced through ordinary phase retrieval. In both cases, we used 500 iterations of the Wirtinger-Flow algorithm \cite{Candes_Phase_2014} as the phase retrieval solver. For the sparse recovery stage of our proposed method we relied on $100$ iterations of the IHT algorithm \cite{Blumensath_Iterative_2009}. Figure \ref{fig:CPR} shows, respectively from left to right, the original image, the recovered image using ordinary phase retrieval, and the recovered image through the proposed two-stage CPR. As counting the degrees of freedom would suggest, the ordinary phase retrieval is expected to fail given that we only obtained $n\approx 0.65 p$ measurement. The two-stage recovery, however, produced an estimate with relative error of less than $5\%$. The specified algorithms and measurement schemes allowed us run the simulation without significant memory or computational requirements. Without computational and memory restrictions, the two-stage method can achieve a similar accuracy at even lower sampling rates (i.e., $n/p$)  by using generic (e.g., Gaussian) measurements.
\begin{figure}
\centering
\noindent
\subfloat[original]{\includegraphics[width=0.32\textwidth]{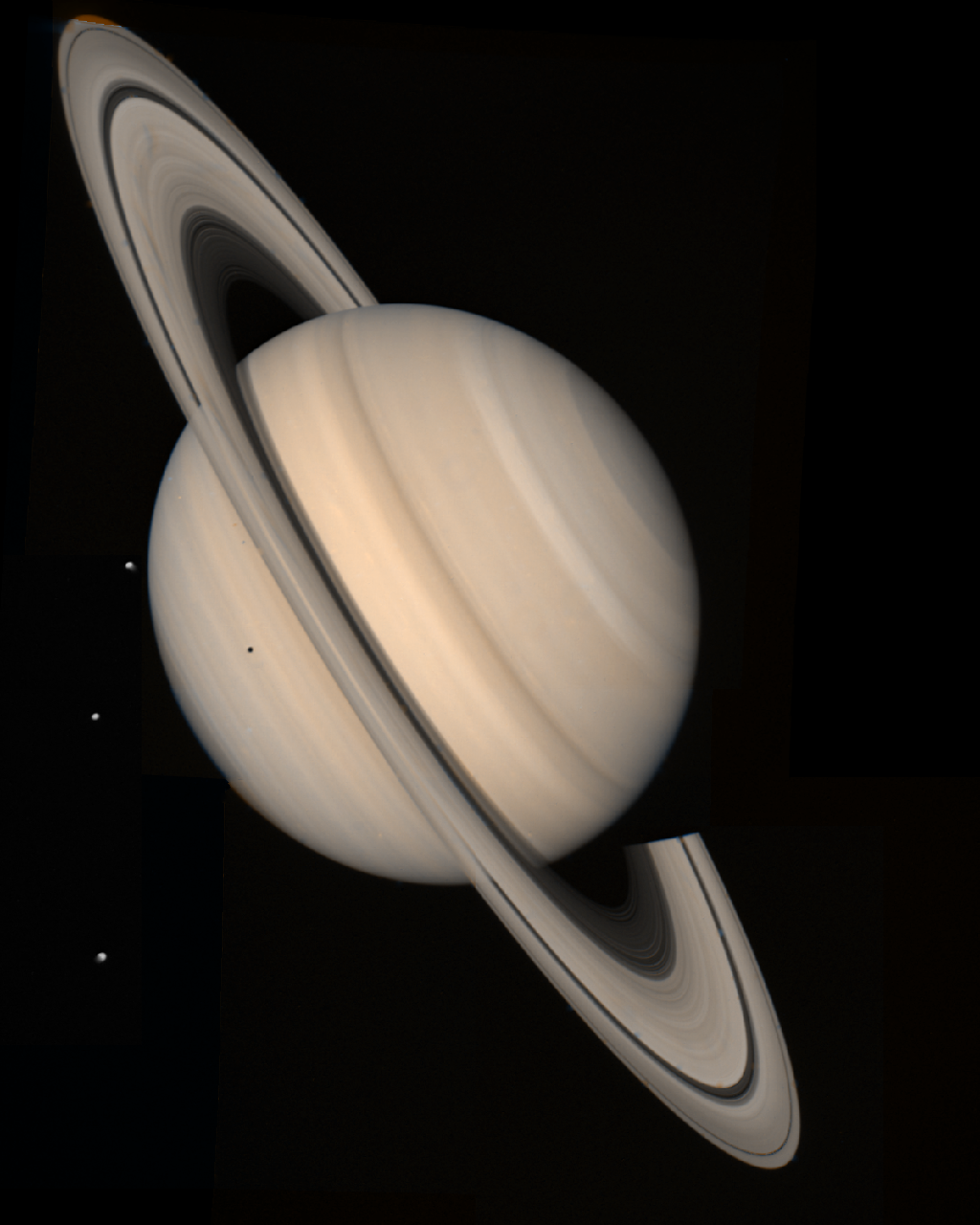}}
\hfill
\subfloat[non-sparse phase retrieval]{\includegraphics[width=0.32\textwidth]{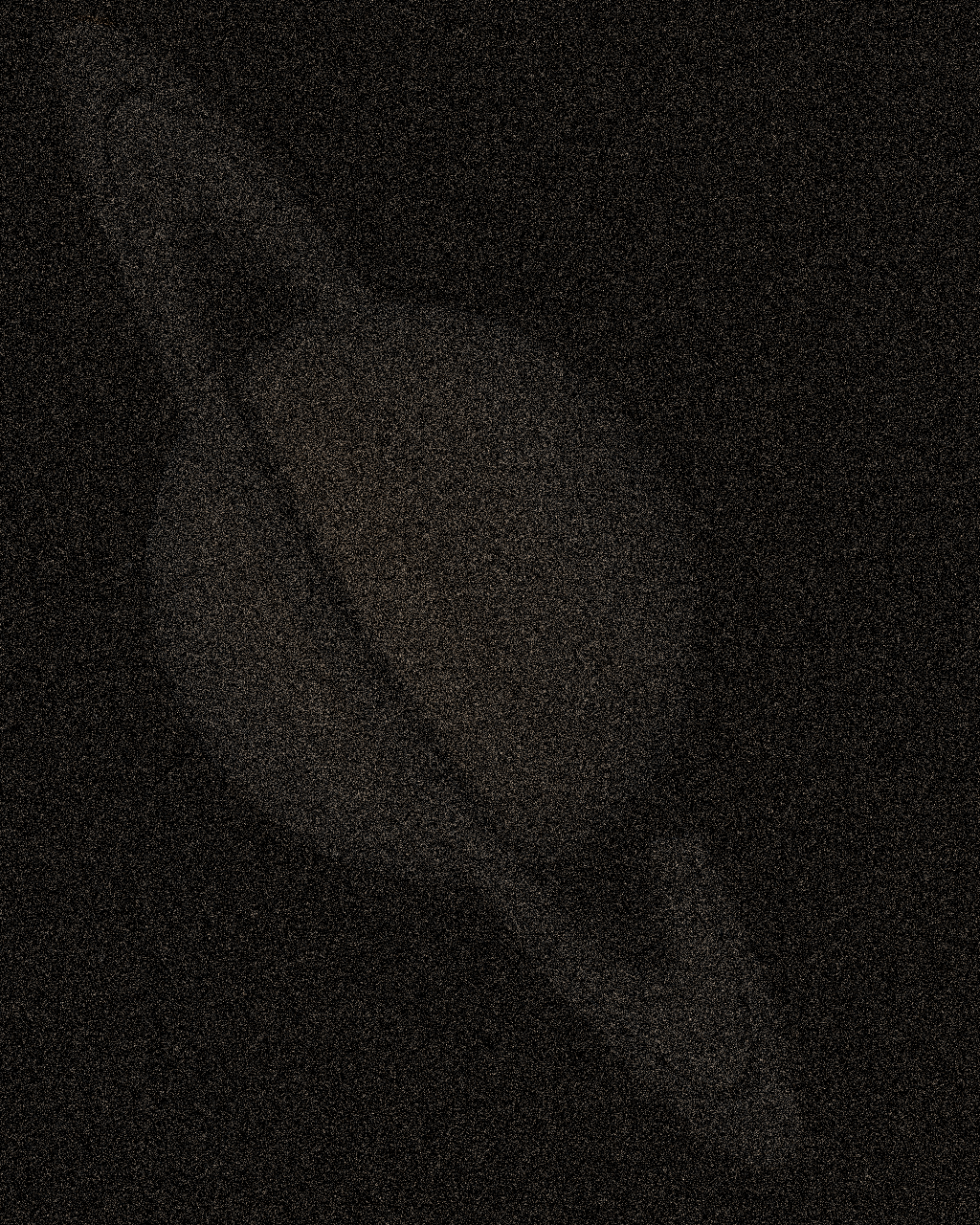}}
\hfill
\subfloat[sparse phase retrieval]{\includegraphics[width=0.32\textwidth]{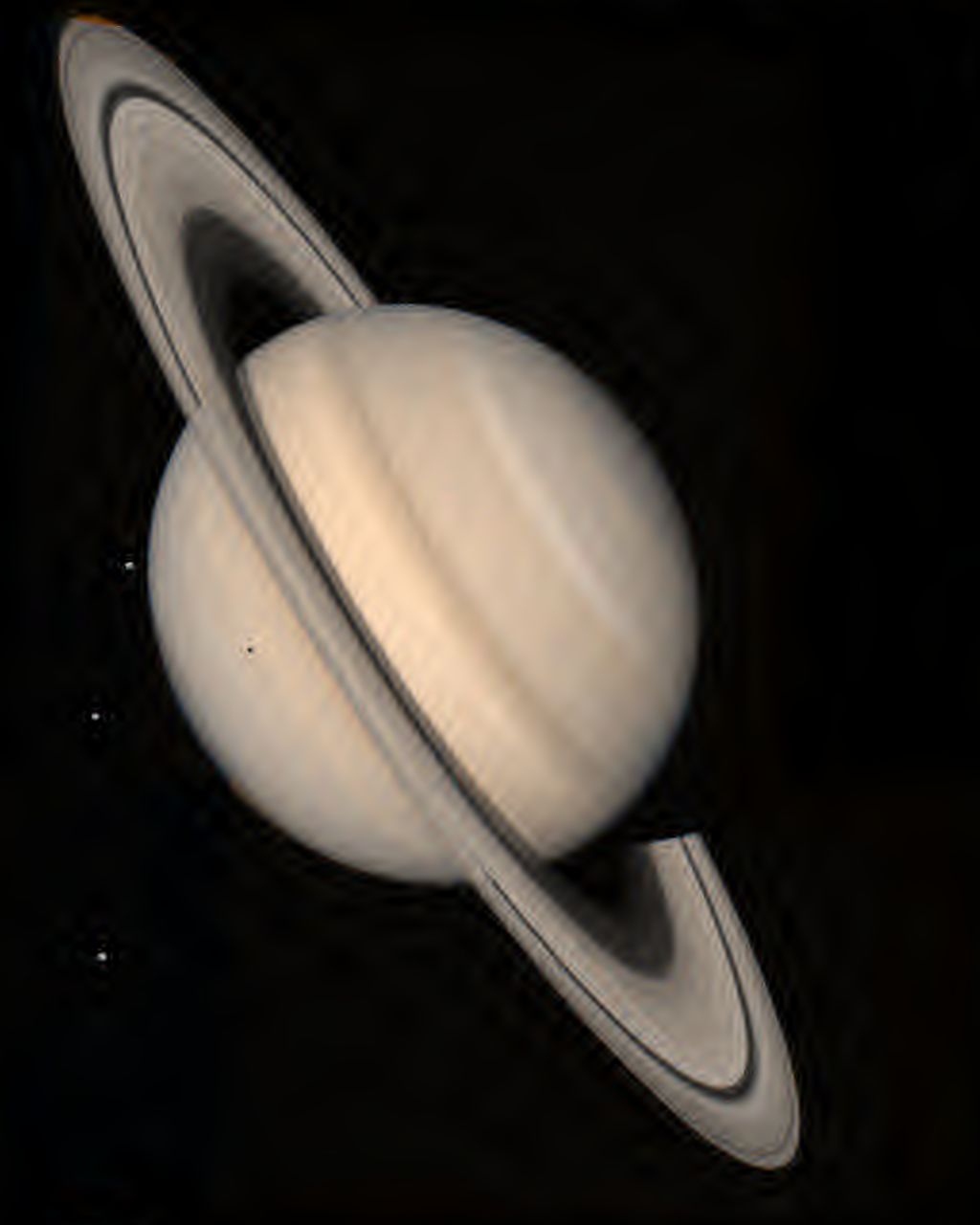}}
\caption{\label{fig:CPR} Compressive phase retrieval of Saturn from nested measurements. While ordinary phase retrieval yields a poor estimate, the proposed two-stage approach that exploits sparsity has produced an estimate with relative error less than $5\%$.}
\end{figure}

\section*{Funding}
This work was supported by ONR grant N00014-11-1-0459 and NSF grant CCF-1422540.

\bibliographystyle{imaiai}
\bibliography{references}

\appendix
\section{Auxiliary tools}

To facilitate readability of the proofs, in this section we state
the most important auxiliary results in our derivations that we borrowed
from the literature.
\begin{thm}[{Fano-type inequality \cite[Theorem 2.5]{Tsybakov_Introduction_2008}}]
\label{thm:Fano}Let $d\left(\cdot,\cdot\right)$ be a metric and
$D\left(\P\parallel\P'\right)$ denote the Kullback-Leibler divergence
of probability measures $\P$ and $\P'$. Assume that $M\geq2$ and
suppose that $\varTheta$ contains elements $\theta_{0},\theta_{1},\dotsc,\theta_{M}$
associated with probability measures $\P_{j}:=\P_{\theta_{j}}$ such
that:
\begin{enumerate}
\item d$\left(\theta_{j},\theta_{k}\right)\geq2s>0,\qquad\forall0\leq j<k\leq M,$
\item $\P_{j}$ is absolutely continuous with respect to $\P_{0}$ for all
$j=1,2,\dotsc,M$, and

\begin{align*}
\frac{1}{M}\sum_{j=1}^{M}D\left(\P_{j}\parallel\P_{0}\right) & \leq\alpha\log M
\end{align*}
 with $0<\alpha<\frac{1}{8}$.

\end{enumerate}
Then 
\begin{align*}
\inf_{\widehat{\theta}}\sup_{\theta\in\varTheta}\P_{\theta}\left(d\left(\widehat{\theta},\theta\right)\geq s\right) & \geq\frac{\sqrt{M}}{1+\sqrt{M}}\left(1-2\alpha-\sqrt{\frac{2\alpha}{\log M}}\right)>0.
\end{align*}
\end{thm}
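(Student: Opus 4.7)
The plan is to reduce the estimation problem to a multiple hypothesis testing problem and then invoke a refined form of Fano's inequality. As a first step, I would associate any candidate estimator $\widehat{\theta}$ with the test $\psi(\widehat{\theta}) := \argmin_{j \in \{0,1,\dotsc,M\}} d(\widehat{\theta}, \theta_j)$, breaking ties arbitrarily. The $2s$-separation condition on the $\theta_j$'s combined with the triangle inequality implies that on the event $\{d(\widehat{\theta}, \theta_j) < s\}$ the nearest $\theta_k$ must be $\theta_j$ itself. Hence $\P_j(\psi \neq j) \leq \P_j(d(\widehat{\theta}, \theta_j) \geq s)$ for every $j$, and it suffices to lower bound $\inf_\psi \max_j \P_j(\psi \neq j)$ over all tests $\psi$ taking values in $\{0,1,\dotsc,M\}$.

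For the testing problem I would take the standard mutual-information route. Placing a uniform prior on $\{0,1,\dotsc,M\}$, the average error probability $\bar p_{e,M} := \frac{1}{M+1}\sum_{j=0}^M \P_j(\psi \neq j)$ is a lower bound on the maximum error. By the variational characterization of KL divergence, for any reference measure $\Q$,
\begin{align*}
I(J; X) &\leq \frac{1}{M+1}\sum_{j=0}^M D(\P_j \parallel \Q).
\end{align*}
Taking $\Q = \P_0$ and using the second hypothesis of the theorem, one obtains $I(J;X) \leq \frac{M}{M+1}\,\alpha\log M \leq \alpha\log M$. A vanilla Fano inequality then yields a bound of the form $\bar p_{e,M} \geq 1 - \frac{I(J;X)+\log 2}{\log M}$, which is almost what we want but is missing both the $\frac{\sqrt{M}}{1+\sqrt{M}}$ prefactor and the refined $\sqrt{2\alpha/\log M}$ slack.

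To upgrade this into the exact form $\frac{\sqrt{M}}{1+\sqrt{M}}\bigl(1 - 2\alpha - \sqrt{2\alpha/\log M}\,\bigr)$, I would use a Birgé-type refinement. The idea is to restrict attention to the subset of indices $j$ for which the log-likelihood ratio $\log(d\P_j/d\P_0)$ is controlled in $\P_0$-probability, which is made possible by Markov's inequality applied to the KL hypothesis and by Pinsker-type quantifications of the ``bad'' set. The $\sqrt{M}/(1+\sqrt{M})$ factor emerges from optimizing a threshold parameter that balances the mass excluded against the strength of the Fano bound on the remaining hypotheses, while the $\sqrt{2\alpha/\log M}$ term accounts for the probability of exceeding the chosen likelihood-ratio threshold.

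The principal technical obstacle is precisely this balancing step: a straightforward Fano bound gives the right leading order $1 - \alpha$ but not the sharp constants required by the statement. Producing the $\sqrt{M}/(1+\sqrt{M})$ prefactor and the $\sqrt{2\alpha/\log M}$ correction cleanly requires a careful truncation argument on the likelihood ratios together with an optimal choice of the truncation level, and tracking these terms through a change of measure is the delicate part. Once this refined testing lower bound is established, the reduction from the first paragraph immediately transfers it to a minimax estimation lower bound, completing the proof.
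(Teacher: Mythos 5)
First, a point of context: the paper does not prove this statement at all --- it is imported verbatim as an auxiliary tool with the citation to Tsybakov (Theorem 2.5 there), so there is no in-paper proof to compare against; the comparison below is with the standard proof of that textbook result. Your opening reduction --- associating an estimator with the minimum-distance test and using the $2s$-separation plus the triangle inequality to pass to $\inf_{\psi}\max_{j}\P_{j}\left(\psi\neq j\right)$ --- is correct and is exactly how that proof begins. Your diagnosis is also correct that the mutual-information/vanilla-Fano route of your second paragraph cannot yield the stated constants, and your intuition that the prefactor $\frac{\sqrt{M}}{1+\sqrt{M}}$ comes from a likelihood-ratio threshold $\tau=1/\sqrt{M}$ inside a bound of the shape $\frac{\tau M}{1+\tau M}\left(\cdots\right)$ matches the actual argument.

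The gap is that the two load-bearing inequalities are asserted rather than derived, and your ``Birg\'e-type refinement'' paragraph does not supply them. Concretely, what is needed is (i) the testing bound
$\inf_{\psi}\max_{0\leq j\leq M}\P_{j}\left(\psi\neq j\right)\geq\frac{\tau M}{1+\tau M}\cdot\frac{1}{M}\sum_{j=1}^{M}\P_{j}\left(\frac{d\P_{0}}{d\P_{j}}\geq\tau\right)$
for every $\tau>0$, which follows from a change of measure on the acceptance regions $\left\{ \psi=j\right\} $ and a short contradiction argument, not from Fano's inequality; and (ii) the moment bound $\E_{\P}\left[\left(\log\frac{d\P}{d\P'}\right)_{+}\right]\leq D\left(\P\parallel\P'\right)+\sqrt{D\left(\P\parallel\P'\right)/2}$, a consequence of Pinsker's second inequality. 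Taking $\tau=1/\sqrt{M}$, applying Markov's inequality to $\left(\log\frac{d\P_{j}}{d\P_{0}}\right)_{+}$ at level $\frac{1}{2}\log M$, averaging over $j$, and using concavity of the square root together with the hypothesis $\frac{1}{M}\sum_{j}D\left(\P_{j}\parallel\P_{0}\right)\leq\alpha\log M$ gives $\frac{1}{M}\sum_{j}\P_{j}\left(\frac{d\P_{0}}{d\P_{j}}<\tau\right)\leq2\alpha+\sqrt{2\alpha/\log M}$, which is exactly the deficit in the statement; combined with (i) this finishes the proof. Your sketch names the right circle of ideas (truncation, Markov, Pinsker) but proves neither inequality, so the claimed constants are never actually obtained; since the paper treats the theorem as a citation, the remedy is either to cite it as the paper does or to write out (i) and (ii) explicitly.
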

\begin{lem}[{Varshamov-Gilbert bound variation \cite[Lemma 4.10]{Massart_Concentration_2007}}]
\label{lem:VG} Let $\left\{ 0,1\right\} ^{N}$be equipped with Hamming
distance $d_{\msf H}$ and given $1\leq D<N$ define $\left\{ 0,1\right\} _{D}^{N}=\left\{ \mb x\in\left\{ 0,1\right\} ^{N}\mid d_{\msf H}\left(\mb 0,\mb x\right)=D\right\} $.
For every $\alpha\in\left(0,1\right)$ and $\beta\in\left(0,1\right)$
such that $D\leq\alpha\beta N$, there exists some subset $\varTheta$
of $\left\{ 0,1\right\} _{D}^{N}$ with the following properties 
\begin{align*}
d_{\msf H}\left(\theta,\theta'\right) & >2\left(1-\alpha\right)D\ \forall\left(\theta,\theta'\right)\in\varTheta^{2}\text{ with }\theta\neq\theta',\\
\log\left|\varTheta\right| & \geq\rho D\log\left(\frac{N}{D}\right),
\end{align*}
 where 
\begin{align*}
\rho & =\frac{\alpha}{-\log\left(\alpha\beta\right)}\left(-\log\left(\beta\right)+\beta-1\right).
\end{align*}

\end{lem}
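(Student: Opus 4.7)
The plan is to establish the lemma through a greedy packing argument on the constant-weight ``slice'' $\left\{0,1\right\}_D^N$, which is the standard route used, e.g., by Massart. The key combinatorial observation is that since every element of $\left\{0,1\right\}_D^N$ has weight exactly $D$, the Hamming distance between two such elements is always even: if $\theta$ has a one where $\theta'$ has a zero in exactly $t$ positions, then by weight preservation the reverse must also hold in exactly $t$ positions, so $d_{\msf H}(\theta,\theta') = 2t$. Consequently, the number of weight-$D$ vectors at Hamming distance at most $2(1-\alpha)D$ from any fixed $\theta$ equals
\[
V(D,\alpha) \coloneqq \sum_{t=0}^{\lfloor (1-\alpha)D\rfloor} \binom{D}{t}\binom{N-D}{t}.
\]

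With this bound I would run the usual greedy maximal packing. Initialize $\varTheta=\emptyset$ and $S=\left\{0,1\right\}_D^N$; while $S$ is nonempty, pick any $\theta\in S$, add it to $\varTheta$, and delete from $S$ all points at Hamming distance $\leq 2(1-\alpha)D$ from $\theta$. The returned $\varTheta$ automatically satisfies the separation requirement $d_{\msf H}(\theta,\theta')>2(1-\alpha)D$ for every distinct pair, and since each iteration removes at most $V(D,\alpha)$ elements from a starting set of size $\binom{N}{D}$, we obtain the cardinality bound
\[
\left|\varTheta\right| \;\geq\; \frac{\binom{N}{D}}{V(D,\alpha)}.
\]

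The remaining task, which I expect to be the main obstacle, is to show that the right-hand side exceeds $\exp\left(\rho D\log(N/D)\right)$ with the $\rho$ stated in the lemma. For this I would combine the lower bound $\binom{N}{D}\geq (N/D)^D$ with an upper bound on $V(D,\alpha)$ obtained by first verifying, using the hypothesis $D\leq \alpha\beta N$, that the summand $\binom{D}{t}\binom{N-D}{t}$ is increasing in $t$ over the range $t\leq (1-\alpha)D$ (the required ratio inequality reduces to $D\leq \alpha N$, which is weaker than $D\leq\alpha\beta N$), so that the sum is dominated up to the polynomial factor $(1-\alpha)D+1$ by its largest term. Plugging in the elementary estimates $\binom{D}{(1-\alpha)D}\leq\left(e/(1-\alpha)\right)^{(1-\alpha)D}$ and $\binom{N-D}{(1-\alpha)D}\leq\left(e(N-D)/((1-\alpha)D)\right)^{(1-\alpha)D}$ and taking logarithms, the inequality $\log\left|\varTheta\right|\geq D\log(N/D)-\log V(D,\alpha)$ reduces to careful bookkeeping of terms proportional to $D\log(N/D)$, $D\log(1/(\alpha\beta))$, and $D\log(1/(1-\alpha))$. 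The delicate point is that the ``extra'' contribution $\beta-1-\log\beta$ -- which is precisely the Bregman discrepancy of $-\log x$ at $x=\beta$, and therefore strictly positive on $(0,1)$ -- emerges naturally upon using $D\leq\alpha\beta N$ to exchange $\log(D/N)$ for $\log(\alpha\beta)$ and collecting the residual constants. Once this identification is made, the coefficient of $D\log(N/D)$ consolidates into exactly $\rho=\frac{\alpha}{-\log(\alpha\beta)}(-\log\beta+\beta-1)$, completing the argument.
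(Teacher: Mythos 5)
First, note that the paper itself does not prove this lemma: it is quoted verbatim as an auxiliary tool and attributed to Massart (Lemma 4.10 of the cited monograph), so the comparison is with that source's argument rather than with a proof in the paper.

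Your combinatorial skeleton (distances on the weight-$D$ slice are even, greedy maximal packing, hence $\left|\varTheta\right|\geq\binom{N}{D}/V(D,\alpha)$ with $V(D,\alpha)=\sum_{t\leq(1-\alpha)D}\binom{D}{t}\binom{N-D}{t}$) is sound, and your monotonicity claim for the summand indeed only needs $D\leq\alpha N$. The genuine gap is exactly the step you defer to ``careful bookkeeping'': with the estimates you propose, the bookkeeping does not close, and the claimed constant $\rho$ is not recovered. Concretely, take $\alpha=\beta=\tfrac12$ and $D=\alpha\beta N=N/4$. Your chain gives
\begin{align*}
\log\frac{\binom{N}{D}}{V(D,\alpha)}\ \geq\ D\log 4-\frac{D}{2}\left(1+\log 2\right)-\frac{D}{2}\left(1+\log 6\right)-O\!\left(\log D\right)\ \approx\ \left(1.386-0.847-1.396\right)D\ <\ 0,
\end{align*}
which is vacuous, whereas the lemma requires $\log\left|\varTheta\right|\geq\rho D\log(N/D)\approx 0.097\,D$ here. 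The culprit is the mismatch between the very lossy lower bound $\binom{N}{D}\geq(N/D)^{D}$ (which discards an entropy contribution of order $D$) and the upper bounds on the largest term of $V$, each of which carries an extra factor $e^{(1-\alpha)D}$; these order-$D$ errors swamp the margin, which is genuinely thin: at $D=\alpha\beta N$ the constant $\rho$ is essentially tight, so no argument with $\Theta(D)$ slack can succeed. The assertion that the terms ``consolidate into exactly $\rho$'' is therefore unsubstantiated and, as written, false. A correct route (and the one behind the cited result) is to rewrite the fraction of weight-$D$ vectors within distance $2(1-\alpha)D$ of a fixed one as the tail probability $\P\left(H\geq\alpha D\right)$ of a hypergeometric overlap $H$, dominate the moment generating function of $H$ by that of a $\mathrm{Binomial}\left(D,D/N\right)$ variable (Hoeffding), and apply a Chernoff bound; the exponent then equals $D\left(\alpha\log\frac{\alpha N}{D}-\alpha+\frac{D}{N}\right)$, from which the function $\beta-1-\log\beta$ and the stated $\rho$ emerge exactly, with equality at $D=\alpha\beta N$. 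Alternatively, your packing bound can be salvaged only by keeping the full entropy expressions for all three binomial coefficients rather than the crude $(N/D)^{D}$ and $\bigl(e\,\cdot\bigr)^{(1-\alpha)D}$ estimates, which then requires a nontrivial entropy inequality you have not supplied.
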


\section{Proofs}

\subsection{Proof of the minimax upper bound}

The following lemma shows the accuracy of the first stage of the estimation
procedure. The proof is analogous to the standard analysis of compressive
sensing and low-rank matrix estimation based on the restricted isometry
property. In particular, our derivations are similar to that of \cite{Candes_Tight_2011}.
We also refer to some of the lemmas proved in \cite{Candes_Tight_2011}
without repeating their statements explicitly. 
\begin{lem}[Low-rank reconstruction]
\label{lem:pre-estimate} Suppose that $n=\msf O\left(r\left(m\vee p_{2}\right)\right)$
and the constant $c_{1}$ in (\ref{eq:pre-estimate}) is sufficiently
large to guarantee that $\mb B^{\star}$ is feasible. If the restricted
isometry constant $\delta_{\tbullet,4r}\left(\mc W\right)<\frac{\sqrt{2}-1}{2}$,
then for some constant $c_{2}>0$ depending only on the restricted
isometry constants, the pre-estimate $\widehat{\mb B}$ obtained from
(\ref{eq:pre-estimate}) obeys 
\begin{align*}
\left\Vert \widehat{\mb B}-\mb B^{\star}\right\Vert _{F} & \leq c_{2}\sigma\sqrt{r\left(m\vee p_{2}\right)},
\end{align*}
 with high probability.\end{lem}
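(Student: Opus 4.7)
The plan is to carry out the classical nuclear-norm-minimization analysis in the style of Candès--Plan, applied to the intermediate recovery problem in $\mbb R^{m\times p_2}$ with the operator $\mc W$. The target of this stage is $\mb B^\star=\mb{\varPsi}\mb X^\star$, which has rank at most $r$ because $\mb X^\star$ does. The bound \eqref{eq:noise-bound} on $\left\Vert\mb z\right\Vert_2$ already ensures, with the advertised high probability, that $\mb B^\star$ is feasible for \eqref{eq:pre-estimate} provided $c_1$ is chosen to match the constant appearing there. Combined with feasibility of the minimizer $\widehat{\mb B}$, the triangle inequality gives the tube bound
\begin{align*}
\left\Vert\mc W(\widehat{\mb B}-\mb B^\star)\right\Vert_2 & \le 2\sigma\sqrt{n+c_1 r(m\vee p_2)}\ \lesssim\ \sigma\sqrt{r(m\vee p_2)},
\end{align*}
where the second inequality uses $n=\msf O(r(m\vee p_2))$. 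This is the only place Gaussianity of the noise enters.

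Next I would invoke the standard nuclear-norm cone condition. Write $\mb H=\widehat{\mb B}-\mb B^\star$ and let $\mb B^\star=\mb U\mb\Sigma\mb V^*$ be its (thin) SVD. Decompose $\mb H=\mc P_T(\mb H)+\mc P_{T^\perp}(\mb H)$ where $T$ is the tangent space to the rank-$r$ manifold at $\mb B^\star$; the first summand has rank at most $2r$ and the second lies in the orthogonal complement of both the column and row spaces of $\mb B^\star$. The optimality condition $\left\Vert\widehat{\mb B}\right\Vert_*\le\left\Vert\mb B^\star\right\Vert_*$, together with the inequality $\left\Vert\mb B^\star+\mc P_{T^\perp}(\mb H)\right\Vert_*=\left\Vert\mb B^\star\right\Vert_*+\left\Vert\mc P_{T^\perp}(\mb H)\right\Vert_*$, yields the usual cone constraint $\left\Vert\mc P_{T^\perp}(\mb H)\right\Vert_*\le\left\Vert\mc P_T(\mb H)\right\Vert_*$. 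This is Lemma~3.1 of \cite{Candes_Tight_2011}, which I would quote directly.

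I would then partition $\mc P_{T^\perp}(\mb H)$ into blocks $\mb H_1,\mb H_2,\dotsc$ of rank at most $2r$ sorted by singular-value magnitude, and write $\mb H_0=\mc P_T(\mb H)$. The standard block-monotonicity argument gives $\sum_{j\ge 2}\left\Vert\mb H_j\right\Vert_F\le (2r)^{-1/2}\left\Vert\mc P_{T^\perp}(\mb H)\right\Vert_*\le\left\Vert\mb H_0\right\Vert_F$, so the ``tail'' is controlled by the ``head''. Because $\mb H_0+\mb H_1$ has rank at most $4r$, the hypothesis $\delta_{\tbullet,4r}(\mc W)<\tfrac{\sqrt 2-1}{2}$ lets us pass between Frobenius norm and measurement norm on the head, and a short computation (again copied from \cite[Lemma~3.2]{Candes_Tight_2011}) yields
\begin{align*}
\left\Vert\mb H\right\Vert_F & \ \lesssim\ \left\Vert\mb H_0+\mb H_1\right\Vert_F\ \lesssim\ \tfrac{1}{\sqrt{1-\delta_{\tbullet,4r}(\mc W)}}\left\Vert\mc W(\mb H_0+\mb H_1)\right\Vert_2\ \lesssim\ \left\Vert\mc W(\mb H)\right\Vert_2,
\end{align*}
where in the last step the cross terms from $\mathcal W(\sum_{j\ge 2}\mb H_j)$ are absorbed using the same cone inequality (this is the usual step where the precise bound $\tfrac{\sqrt 2-1}{2}$ is needed so that an absolute constant remains after rearranging). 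Combining with the tube bound of the first paragraph gives $\left\Vert\mb H\right\Vert_F\lesssim\sigma\sqrt{r(m\vee p_2)}$, which is the claim.

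I do not anticipate a real obstacle: the argument is essentially a transcription of the sharp RIP-based analysis for nuclear-norm minimization, applied in the ``intermediate'' space $\mbb R^{m\times p_2}$ rather than $\mbb R^{p_1\times p_2}$. The only part that needs care is threading the noise level $\sigma\sqrt{n+c_1 r(m\vee p_2)}$ through so that the final bound is of order $\sigma\sqrt{r(m\vee p_2)}$ rather than $\sigma\sqrt{n}$; this is exactly why the noise slack $c_1 r(m\vee p_2)$ in \eqref{eq:pre-estimate} was chosen to dominate $n$ under the sample-complexity hypothesis.
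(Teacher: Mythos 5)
Your proposal is correct, and the cone/RIP half of it (tangent-space decomposition, cone constraint from optimality, rank-$2r$ blocking with the tail controlled by the head, and the cross-term absorption under $\delta_{\tbullet,4r}\left(\mc W\right)<\frac{\sqrt{2}-1}{2}$) is essentially the same machinery the paper uses. Where you genuinely diverge is the treatment of the noise. You use the crude ``tube'' bound: feasibility of both $\mb B^{\star}$ and $\widehat{\mb B}$ plus the triangle inequality give $\left\Vert \mc W\left(\mb E\right)\right\Vert _{2}\leq2\sigma\sqrt{n+c_{1}r\left(m\vee p_{2}\right)}$, and you then invoke the hypothesis $n=\msf O\left(r\left(m\vee p_{2}\right)\right)$ to convert this into $\lesssim\sigma\sqrt{r\left(m\vee p_{2}\right)}$. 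The paper instead follows the Cand\`es--Plan ``tight oracle'' route: it expands $\left\Vert \mc W\left(\mb E\right)-\mb z\right\Vert _{2}^{2}$, bounds the cross term by $\left\Vert \mb E\right\Vert _{*}\left\Vert \mc W^{*}\left(\mb z\right)\right\Vert$ with the spectral-norm estimate $\left\Vert \mc W^{*}\left(\mb z\right)\right\Vert \lesssim\sigma\sqrt{m\vee p_{2}}$ from \cite[Lemma 1.1]{Candes_Tight_2011}, and uses the two-sided chi-squared bound \eqref{eq:noise-bound} so that the $\sigma^{2}n$ terms cancel, leaving only $\sigma^{2}r\left(m\vee p_{2}\right)$; combining with the RIP lower bound yields a quadratic inequality in $\left\Vert \mb E\right\Vert _{F}$. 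The trade-off: your argument is shorter, needs only the upper chi-squared tail (not the lower tail nor the bound on $\left\Vert \mc W^{*}\left(\mb z\right)\right\Vert$), and suffices because the lemma explicitly assumes $n=\msf O\left(r\left(m\vee p_{2}\right)\right)$; the paper's argument does not lean on that assumption at this step, so it would give the $\sigma\sqrt{r\left(m\vee p_{2}\right)}$ rate even when $n$ greatly exceeds $r\left(m\vee p_{2}\right)$, and its constant does not absorb the implicit constant in $n=\msf O\left(r\left(m\vee p_{2}\right)\right)$, whereas yours does (a cosmetic mismatch with the lemma's phrase ``depending only on the restricted isometry constants'', which in the paper's own proof already tacitly includes $c_{1}$). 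Under the stated hypotheses both routes prove the lemma.
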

\begin{proof}
It follows from optimality of $\widehat{\mb B}$ and feasibility of
$\mb B^{\star}$ in (\ref{eq:pre-estimate}) that 
\begin{align}
\left\Vert \widehat{\mb B}\right\Vert _{*} & \leq\left\Vert \mb B^{\star}\right\Vert _{*}.\label{eq:optimality}
\end{align}
Let $\mb U\mb{\varSigma}\mb V^{\msf T}$ be the (compact) singular
value decomposition of the rank-$r$ matrix $\mb B^{\star}\in\mbb R^{m\times p_{2}}$.
Denote the subspace of matrices that are ``supported'' on $\mb U$
and $\mb V$ by 
\begin{align*}
T_{0} & =\left\{ \mb B\in\mbb R^{m\times p_{2}}\mid\left(\mb I-\mb U\mb U^{\msf T}\right)\mb B\left(\mb I-\mb V\mb V^{\msf T}\right)=\mb 0\right\} .
\end{align*}
 Furthermore, let $\mb E=\widehat{\mb B}-\mb B^{\star}$ and define
$\mb E_{0}$ to be the projection of $\mb E$ onto the subspace $T_{0}$.
We have 
\begin{align*}
\left\Vert \mb B^{\star}\right\Vert _{*} & \geq\left\Vert \widehat{\mb B}\right\Vert _{*}=\left\Vert \mb B^{\star}+\mb E-\mb E_{0}+\mb E_{0}\right\Vert _{*}\\
 & \geq\left\Vert \mb B^{\star}+\mb E-\mb E_{0}\right\Vert _{*}-\left\Vert \mb E_{0}\right\Vert _{*}\\
 & =\left\Vert \mb B^{\star}\right\Vert _{*}+\left\Vert \mb E-\mb E_{0}\right\Vert _{*}-\left\Vert \mb E_{0}\right\Vert _{*},
\end{align*}
where the first and the second line follow from (\ref{eq:optimality})
and the triangle inequality, and the third line follows from the fact
that $\mb B^{\star}$ and $\mb E-\mb E_{0}$ have mutually orthogonal
columnspaces and rowspaces since $\mb E-\mb E_{0}\in T_{0}^{\perp}$
(see \cite[Lemma 2.3]{Recht_Guaranteed_2010}). Therefore, we deduce
that 
\begin{align}
\left\Vert \mb E-\mb E_{0}\right\Vert _{*} & \leq\left\Vert \mb E_{0}\right\Vert _{*}\leq\sqrt{2r}\left\Vert \mb E_{0}\right\Vert _{F}.\label{eq:nuc-tail}
\end{align}

Using the definition of the measurement $\mb y=\mc W\left(\mb B^{\star}\right)+\mb z$
we can write 
\begin{align*}
\left\Vert \mc W\left(\widehat{\mb B}\right)-\mb y\right\Vert _{2}^{2}=\left\Vert \mc W\left(\mb E\right)-\mb z\right\Vert _{2}^{2} & =\left\Vert \mc W\left(\mb E\right)\right\Vert _{2}^{2}-2\left\langle \mc W\left(\mb E\right),\mb z\right\rangle +\left\Vert \mb z\right\Vert _{2}^{2}\\
 & \geq\left\Vert \mc W\left(\mb E\right)\right\Vert _{2}^{2}-2\left\Vert \mb E\right\Vert _{*}\left\Vert \mc W^{*}\left(\mb z\right)\right\Vert +\left\Vert \mb z\right\Vert _{2}^{2}.
\end{align*}
It is shown in \cite[Lemma 1.1]{Candes_Tight_2011} that for a sufficiently
large constant $c>0$ we have $\left\Vert \mc W^{*}\left(\mb z\right)\right\Vert \leq c\sigma\sqrt{m\vee p_{2}}$
with probability exceeding $1-2e^{-c_{0}n}$ where $c_{0}$ is an
absolute constant. Therefore, given the feasibility of $\widehat{\mb B}$
and (\ref{eq:noise-bound}) we obtain 
\begin{align*}
\left\Vert \mc W\left(\mb E\right)\right\Vert _{2}^{2} & \leq2c\sigma\left\Vert \mb E\right\Vert _{*}\sqrt{m\vee p_{2}}+2c_{1}\sigma^{2}r\left(m\vee p_{2}\right).
\end{align*}
Then, using the triangle inequality $\left\Vert \mb E\right\Vert _{*}\leq\left\Vert \mb E_{0}\right\Vert _{*}+\left\Vert \mb E-\mb E_{0}\right\Vert _{*}$
and (\ref{eq:nuc-tail}) we deduce that
\begin{align}
\left\Vert \mc W\left(\mb E\right)\right\Vert _{2}^{2} & \leq4c\sigma\sqrt{2r}\left\Vert \mb E_{0}\right\Vert _{F}\sqrt{m\vee p_{2}}+2c_{1}\sigma^{2}r\left(m\vee p_{2}\right)\nonumber \\
 & \leq4c\sigma\sqrt{2r}\left\Vert \mb E\right\Vert _{F}\sqrt{m\vee p_{2}}+2c_{1}\sigma^{2}r\left(m\vee p_{2}\right).\label{eq:L(Delta)-upper}
\end{align}
For $i=1,2,\dotsc$, define the matrices $\mb E_{i}$ recursively
as the best rank-$2r$ approximation of $\mb E-\sum_{j=0}^{i-1}\mb E_{j}$
until $\mb E_{i}=\mb 0$. Clearly, we have 
\begin{align*}
\mb E & =\sum_{j\geq0}\mb E_{j},
\end{align*}
and thereby 
\begin{align*}
\mc W\left(\mb E\right) & =\sum_{j\geq0}\mc W\left(\mb E_{j}\right).
\end{align*}
 To simplify the notation we use the shorthand $\delta_{\tbullet,4r}=\delta_{\tbullet,4r}\left(\mc W\right)$
below. It follows from \cite[Lemma 3.3]{Candes_Tight_2011} that for
any pair of distinct indices $j$ and $j'$ we have 
\begin{align*}
\left\langle \mc W\left(\mb E_{j}\right),\mc W\left(\mb E_{j'}\right)\right\rangle  & \geq-\delta_{\tbullet,4r}\left\Vert \mb E_{j}\right\Vert _{F}\left\Vert \mb E_{j'}\right\Vert _{F}.
\end{align*}
 Therefore, we can expand $\left\Vert \mc W\left(\mb E\right)\right\Vert _{2}^{2}$
and write 
\begin{align}
\left\Vert \mc W\left(\mb E\right)\right\Vert _{2}^{2} & =\left\Vert \sum_{j\geq0}\mc W\left(\mb E_{j}\right)\right\Vert _{2}^{2}\nonumber \\
 & =\left\Vert \mc W\left(\mb E_{0}\right)+\mc W\left(\mb E_{1}\right)\right\Vert _{2}^{2}+\left\Vert \sum_{j\geq2}\mc W\left(\mb E_{j}\right)\right\Vert _{2}^{2}+2\sum_{j\geq2}\left\langle \mc W\left(\mb E_{0}\right)+\mc W\left(\mb E_{1}\right),\mc W\left(\mb E_{j}\right)\right\rangle \nonumber \\
 & =\left\Vert \mc W\left(\mb E_{0}\right)+\mc W\left(\mb E_{1}\right)\right\Vert _{2}^{2}+\sum_{j\geq2}\left\Vert \mc W\left(\mb E_{j}\right)\right\Vert _{2}^{2}\nonumber \\
 & +2\sum_{j'>j\geq2}\left\langle \mc W\left(\mb E_{j'}\right),\mc W\left(\mb E_{j}\right)\right\rangle +2\sum_{j\geq2}\left\langle \mc W\left(\mb E_{0}\right)+\mc W\left(\mb E_{1}\right),\mc W\left(\mb E_{j}\right)\right\rangle \nonumber \\
 & \geq\left(1-\delta_{\tbullet,4r}\right)\left(\left\Vert \mb E_{0}+\mb E_{1}\right\Vert _{F}^{2}+\sum_{j\geq2}\left\Vert \mb E_{j}\right\Vert _{F}^{2}\right)\nonumber \\
 & -2\delta_{\tbullet,4r}\sum_{j\geq2}\left(\left\Vert \mb E_{0}\right\Vert _{F}+\left\Vert \mb E_{1}\right\Vert _{F}\right)\left\Vert \mb E_{j}\right\Vert _{F}-2\delta_{\tbullet,4r}\sum_{j'>j\geq2}\left\Vert \mb E_{j}\right\Vert _{F}\left\Vert \mb E_{j'}\right\Vert _{F}\nonumber \\
 & \geq\left\Vert \mb E\right\Vert _{F}^{2}-\delta_{\tbullet,4r}\left(\left\Vert \mb E_{0}+\mb E_{1}\right\Vert _{F}^{2}+2\left(\left\Vert \mb E_{0}\right\Vert _{F}+\left\Vert \mb E_{1}\right\Vert _{F}\right)\sum_{j\geq2}\left\Vert \mb E_{j}\right\Vert _{F}\right)\label{eq:L(Delta)-lower0}\\
 & -\delta_{\tbullet,4r}\left(\sum_{j\geq2}\left\Vert \mb E_{j}\right\Vert _{F}\right)^{2}.\nonumber 
\end{align}
Because $\mb E_{0}$ and $\mb E_{1}$ are orthogonal we have $\left\Vert \mb E_{0}\right\Vert _{F}+\left\Vert \mb E_{1}\right\Vert _{F}\leq\sqrt{2}\left\Vert \mb E_{0}+\mb E_{1}\right\Vert _{F}$.
Furthermore, the construction of $\mb E_{j}$ guarantees that 
\begin{align*}
\left\Vert \mb E_{j}\right\Vert _{F} & \leq\frac{1}{\sqrt{2r}}\left\Vert \mb E_{j-1}\right\Vert _{*},
\end{align*}
 for $j\geq2$. Since for $j\geq1$ both the columnspaces and the
rowspaces of the matrices $\mb E_{j}$ are mutually orthogonal, we
can invoke \cite[Lemma 2.3]{Recht_Guaranteed_2010} and write 
\begin{align*}
\sum_{j\geq2}\left\Vert \mb E_{j-1}\right\Vert _{*}=\left\Vert \sum_{j\geq2}\mb E_{j-1}\right\Vert _{*}=\left\Vert \mb E-\mb E_{0}\right\Vert _{*} & .
\end{align*}
Therefore, in view of (\ref{eq:nuc-tail}) we obtain 
\begin{align*}
\sum_{j\geq2}\left\Vert \mb E_{j}\right\Vert _{F}\leq\frac{1}{\sqrt{2r}}\sum_{j\geq2}\left\Vert \mb E_{j-1}\right\Vert _{*}=\frac{1}{\sqrt{2r}}\left\Vert \mb E-\mb E_{0}\right\Vert _{*} & \leq\left\Vert \mb E_{0}\right\Vert _{F}\leq\left\Vert \mb E_{0}+\mb E_{1}\right\Vert _{F}.
\end{align*}
 Therefore, we can simplify the bound (\ref{eq:L(Delta)-lower0})
to 
\begin{align}
\left\Vert \mc W\left(\mb E\right)\right\Vert _{2}^{2} & \geq\left\Vert \mb E\right\Vert _{F}^{2}-2\left(1+\sqrt{2}\right)\delta_{\tbullet,4r}\left\Vert \mb E_{0}+\mb E_{1}\right\Vert _{F}^{2}\nonumber \\
 & \geq\left(1-2\left(1+\sqrt{2}\right)\delta_{\tbullet,4r}\right)\left\Vert \mb E\right\Vert _{F}^{2}.\label{eq:L(Delta)-lower}
\end{align}
 If $\delta_{\tbullet,4r}<\frac{\sqrt{2}-1}{2}$, then (\ref{eq:L(Delta)-upper})
and (\ref{eq:L(Delta)-lower}) yield 
\begin{align*}
\left\Vert \mb E\right\Vert _{F} & \leq c_{2}\sigma\sqrt{r\left(m\vee p_{2}\right)},
\end{align*}
 for some constant $c_{2}>0$ that only depends on the restricted
isometry constants of $\mc W$.
\end{proof}

We are now ready to prove Theorem \ref{thm:upper-bound}.

\begin{proof}[Proof of Theorem \ref{thm:upper-bound}]
 Recall that $\mb B^{\star}=\mb{\varPsi}\mb X^{\star}$. Lemma \ref{lem:pre-estimate}
guarantees that $\widehat{\mb B}$ obtained from (\ref{eq:pre-estimate})
obeys 
\begin{align*}
\left\Vert \mb{\varPsi}\mb X^{\star}-\widehat{\mb B}\right\Vert _{F}=\left\Vert \mb B^{\star}-\widehat{\mb B}\right\Vert _{F} & \leq c_{2}\sigma\sqrt{r\left(m\vee p_{2}\right)},
\end{align*}
where $c_{2}$ is constant depending only on the restricted isometry
constants of $\mc W$. Therefore, $\mb X^{\star}$ is feasible for
(\ref{eq:estimate}). The fact that $\mb{\varPsi}$ is also a restricted
isometry allows us to apply results from compressive sensing of block-sparse
signals (see, e.g., \cite[Theorem 2 and Section VI]{Eldar_Robust_2009})
and obtain 
\begin{align*}
\left\Vert \widehat{\mb X}-\mb X^{\star}\right\Vert _{F} & \leq C\sigma\sqrt{r\left(m\vee p_{2}\right)},
\end{align*}
 where $C>0$ depends on $\delta_{\tbullet,4r}\left(\mc W\right)$
through $c_{2}$ and on $\delta_{2k,\tbullet}\left(\mb{\varPsi}\right)$.
\end{proof}

\subsection{Proof of the minimax lower bound}

We follow a standard strategy to establish the minimax lower bound
using information theoretic tools. The minimax probability of inaccurate
estimation over $\mbb X_{k,r}$ can be bounded from below by the same
quantity over any subset of $\mbb X_{k,r}$, particularly the finite
subsets. This probability can be further relaxed to the minimax probability
of error in a hypothesis testing problem defined for the same subset.
Therefore, to obtain a tight lower bound on the minimax probability
of error, it suffices to construct a large set of hypotheses that
are difficult to distinguish based on the observations. In particular,
we need to choose a finite but sufficiently large number of potential
target matrices in $\mbb X_{k,r}$ such that they are well-separated
but they are difficult to distinguish from their noisy compressive
measurements. To this end, we construct two different finite subsets
$\mbb X'$ and $\mbb X''$ of the set of $p_{1}\times p_{2}$ matrices
of rank no more than $r$ that have at most $k$ nonzero rows and
Frobenius norm at most $\varepsilon$ which is denoted by 
\begin{align*}
\mbb X{}_{k,r,\varepsilon} & \coloneqq\left\{ \mb X\in\mbb X_{k,r}\mid\left\Vert \mb X\right\Vert _{F}\leq\varepsilon\right\} .
\end{align*}
 Construction of these two subsets depend critically on the choice
of the sets $\mbb S$, $\mathbb{T}'$, and $\mbb T''$ that are subsets
of $\left\{ S\subset\left[p_{1}\right]|\left|S\right|=k\right\} $,
$\left\{ \pm1\right\} ^{r\times p_{2}}$, and $\left\{ \pm1\right\} ^{k\times r}$,
respectively. Elements of the set $\mbb S$ determine the indices
of the nonzero rows, whereas the elements of $\mbb T'$ and $\mbb T''$
determine the value of the nonzero entries. As explained below, we
rely on Lemma \ref{lem:VG}, a variant of the Varshamov-Gilbert bound
established in \cite{Massart_Concentration_2007}, to appropriately
choose $\mbb S$, $\mbb T'$, and $\mbb T''$. Finally, using the
hypothesis sets $\mbb X'$ and $\mbb X''$ we derive a minimax lower
bound by invoking Theorem \ref{thm:Fano} which is one of the Fano-type
inequalities established in \cite{Tsybakov_Introduction_2008}.

\begin{proof}[Proof of Theorem \ref{thm:lower-bound}]
Given $\mbb S$, $\mbb T'$, and $\mbb T''$ define 
\begin{align*}
\mbb X' & \coloneqq\left\{ \frac{\varepsilon}{\sqrt{kp_{2}}}\left[\begin{array}{cc}
\mb I_{p_{1},S} & \mb 0_{r\left\lceil \frac{k}{r}\right\rceil -k}\end{array}\right]\left(\mb 1_{\left\lceil \frac{k}{r}\right\rceil \times1}\otimes\mb T\right)\mid S\in\mbb S\ \text{and}\ \mb T\in\mbb T'\right\} ,
\end{align*}
and 
\begin{align*}
\mbb X'' & \coloneqq\left\{ \frac{\varepsilon}{\sqrt{kp_{2}}}\mb I_{p_{1},S}\left(\mb T\otimes\mb 1_{1\times\left\lceil \frac{p_{2}}{r}\right\rceil }\right)\mb I_{r\left\lceil \frac{p_{2}}{r}\right\rceil ,\left[p_{2}\right]}\mid S\in\mbb S\ \text{and}\ \mb T\in\mbb T''\right\} .
\end{align*}
In words, each matrix in $\mbb X'$ basically consists of $\frac{k}{r}$
copies of an $r\times p_{2}$ binary matrix that are stacked on top
of each other and interleaved with all-zero rows. Clearly, the matrices
in $\mbb X'$ have $k$ nonzero rows, are of rank at most $r$, and
have Frobenius norm of $\varepsilon$, showing that $\mbb X'\subset\mbb X_{k,r,\varepsilon}$.
Similarly, each matrix in $\mbb X''$ is essentially a horizontal
concatenation of $\frac{p_{2}}{r}$ copies of an $k\times r$ binary
matrix that is interleaved row-wise with all-zero rows. It is straightforward
to verify that the matrices in $\mbb X''$ have $k$ nonzero rows,
are of rank at most $r$, and have Frobenius norm of $\varepsilon$,
which show that $\mbb X''\subset\mbb X_{k,r,\varepsilon}$ as well.

We use the Varshamov-Gilbert bound as stated in Lemma \ref{lem:VG}
to choose sufficiently large sets $\mbb S$, $\mbb T'$, and $\mbb T''$.
Treating each of the sets in $\mbb S$ as a binary sequence of length
$p_{1}$, Lemma \ref{lem:VG}  guarantees existence of a set $\mbb S$
of $k$-subsets of $\left[p_{1}\right]$ such that 
\begin{align*}
\log\left|\mbb S\right| & \geq\frac{4}{25}k\log\frac{p_{1}}{k}, & \text{ and } &  & d_{\msf H}\left(S_{1},S_{2}\right) & \geq\frac{1}{4}k,\ \forall\left(S_{1},S_{2}\right)\in\mbb S^{2},\,S_{1}\neq S_{2}.
\end{align*}
 We can also treat each matrix in $\mbb T'$ as a binary string of
length $rp_{2}$, apply Lemma \ref{lem:VG},  and show that there
exists a set $\mbb T'$ of matrices in $\left\{ \pm1\right\} ^{r\times p_{2}}$
that satisfies 
\begin{align*}
\log\left|\mbb T'\right| & \geq\frac{3}{25}rp_{2}, & \text{ and } &  & d_{\msf H}\left(\mb T_{1},\mb T_{2}\right) & \geq\frac{1}{8}rp_{2},\ \forall\left(\mb T_{1},\mb T_{2}\right)\in{\mbb T'}^{2},\,\mb T_{1}\neq\mb T_{2}.
\end{align*}
 Similarly, there exists a set $\mbb T''$ of matrices in $\left\{ \pm1\right\} ^{k\times r}$
such that 
\begin{align*}
\log\left|\mbb T''\right| & \geq\frac{3}{25}rk, & \text{ and } &  & d_{\msf H}\left(\mb T_{1},\mb T_{2}\right) & \geq\frac{1}{8}rk,\ \forall\left(\mb T_{1},\mb T_{2}\right)\in{\mbb T''}^{2},\,\mb T_{1}\neq\mb T_{2}.
\end{align*}

Let $\left(S_{1},\mb T_{1}\right)$ and $\left(S_{2},\mb T_{2}\right)$
be two distinct pairs in $\mbb S\times\mbb T'$ using which we can
construct the matrices $\mb X_{1}$ and $\mb X_{2}$ in $\mbb X'$,
respectively. If $S_{1}\neq S_{2}$, then counting only the rows of
$\mb X_{1}-\mb X_{2}$ that are not in the set $S_{1}\cap S_{2}$
we obtain 
\begin{align*}
\left\Vert \mb X_{1}-\mb X_{2}\right\Vert _{F} & \geq\frac{\varepsilon}{\sqrt{kp_{2}}}\sqrt{d_{\msf H}\left(S_{1},S_{2}\right)p_{2}}\geq\frac{\varepsilon}{2}.
\end{align*}
Furthermore, if $S_{1}=S_{2}$ and $\mb T_{1}\neq\mb T_{2}$, then
we have 
\begin{align*}
\left\Vert \mb X_{1}-\mb X_{2}\right\Vert _{F} & \geq\frac{2\varepsilon}{\sqrt{kp_{2}}}\sqrt{d_{\msf H}\left(\mb T_{1},\mb T_{2}\right)\left\lfloor \frac{k}{r}\right\rfloor }\geq\varepsilon\sqrt{\frac{r\left\lfloor \frac{k}{r}\right\rfloor }{2k}}\geq\frac{\varepsilon}{2}.
\end{align*}
 Therefore, the above inequalities and the fact that $\log\left|\mbb X'\right|=\log\left|\mbb S\right|+\log\left|\mbb T'\right|$
show that the set $\mbb X'$ obeys 
\begin{align}
\log\left|\mbb X'\right| & \geq\frac{4}{25}k\log\frac{p_{1}}{k}+\frac{3}{25}rp_{2}, & \text{ and } &  & \left\Vert \mb X_{1}-\mb X_{2}\right\Vert _{F} & \geq\frac{\varepsilon}{2},\ \forall\left(\mb X_{1},\mb X_{2}\right)\in{\mbb X'}^{2},\,\mb X_{1}\neq\mb X_{2}\label{eq:logA1}
\end{align}
 Similarly, we can show that $\mbb X''$ obeys 
\begin{align}
\log\left|\mbb X''\right| & \geq\frac{4}{25}k\log\frac{p_{1}}{k}+\frac{3}{25}rk, & \text{ and } &  & \left\Vert \mb X_{1}-\mb X_{2}\right\Vert _{F} & \geq\frac{\varepsilon}{2},\ \forall\left(\mb X_{1},\mb X_{2}\right)\in{\mbb X''}^{2},\,\mb X_{1}\neq\mb X_{2}\label{eq:logA2}
\end{align}

For any matrix $\mb X\in\mbb R^{p_{1}\times p_{2}}$ let $\P_{\mb X}$
denote the Gaussian distribution $\msf N\left(\mc A\left(\mb X\right),\sigma^{2}\mb I\right)$
which is the distribution of the measurement $\mb y$ if $\mb X$
is the target matrix. Recall that by assumption $\mc A$ is a restricted
isometry over $\mbb X_{k,r}$ as defined by (\ref{eq:A-bounded})
with the restricted isometry constant $\delta=\delta_{k,r}\left(\mc A\right)$.
For any $\mb X\in\mbb X'\subset\mbb X_{k,r}$ the KL-divergence between
$\P_{\mb X}$ and $\P_{\mb 0}$ can be bounded as 
\begin{align*}
D\left(\P_{\mb X}\|\P_{\mb 0}\right) & =\frac{\left\Vert \mc A\left(\mb X\right)\right\Vert _{2}^{2}}{2\sigma^{2}}\leq\frac{\gamma_{k,r}\left\Vert \mb X\right\Vert _{F}^{2}}{2\sigma^{2}}=\frac{\gamma_{k,r}\varepsilon^{2}}{2\sigma^{2}},
\end{align*}
 where the inequality follows from restricted isometry assumption
on $\mc A$. Therefore, we have 
\begin{align*}
\frac{1}{\left|\mbb X'\right|}\sum_{\mb X\in\mbb X'}D\left(\P_{\mb X}\|\P_{\mb 0}\right) & \leq\frac{\gamma_{k,r}\varepsilon^{2}}{2\sigma^{2}}.
\end{align*}
Suppose that we have 
\begin{align*}
\frac{\gamma_{k,r}\varepsilon^{2}}{2\sigma^{2}} & \leq\alpha'\log\left|\mbb X'\right|,
\end{align*}
holds for some $\alpha'\in\left(0,\frac{1}{8}\right)$. Then, because
any matrix $\mb X\in\mbb X'$ also satisfies $\left\Vert \mb X-\mb 0\right\Vert _{F}=\left\Vert \mb X\right\Vert _{F}=\varepsilon>\frac{\varepsilon}{2}$,
we can use (\ref{eq:logA1}) and invoke Theorem \ref{thm:Fano} to
guarantee that 
\begin{align*}
\inf_{\widehat{\mb X}}\sup_{\mb X^{\star}\in\mbb X_{k,r,\varepsilon}}\P\left(\left\Vert \widehat{\mb X}-\mb X^{\star}\right\Vert _{F}\geq\frac{\varepsilon}{4}\right) & \geq\frac{\sqrt{\left|\mbb X'\right|}}{1+\sqrt{\left|\mbb X'\right|}}\left(1-2\alpha'-\sqrt{\frac{2\alpha'}{\log\left|\mbb X'\right|}}\right).
\end{align*}
From (\ref{eq:logA1}) we have the crude lower bound $\log\left|\mbb X'\right|\geq\frac{3}{25}$.
Therefore, with $\alpha'=5\times10^{-5}$ we can choose 
\begin{align*}
\varepsilon & =10^{-2}\sigma\sqrt{\frac{k\log\frac{p_{1}}{k}+rp_{2}}{\gamma_{k,r}}},
\end{align*}
 and obtain 
\begin{align}
\inf_{\widehat{\mb X}}\sup_{\mb X^{\star}\in\mbb X_{k,r,\varepsilon}}\P\left(\left\Vert \widehat{\mb X}-\mb X^{\star}\right\Vert _{F}\geq2.5\times10^{-3}\sigma\sqrt{\frac{k\log\frac{p_{1}}{k}+rp_{2}}{\gamma_{k,r}}}\right) & >\frac{1}{2}.\label{eq:row-partition}
\end{align}
 Furthermore, for the set $\mbb X''$ if we have 
\begin{align*}
\frac{\left(1+\delta\right)\varepsilon^{2}}{2\sigma^{2}} & \leq\alpha''\log\left|\mbb X''\right|,
\end{align*}
for some $\alpha''\in\left(0,\frac{1}{8}\right)$, then using (\ref{eq:logA2})
we can apply Theorem \ref{thm:Fano} and show that 
\begin{align*}
\inf_{\widehat{\mb X}}\sup_{\mb X^{\star}\in\mbb X_{k,r,\varepsilon}}\P\left(\left\Vert \widehat{\mb X}-\mb X^{\star}\right\Vert _{F}\geq\frac{\varepsilon}{4}\right) & \geq\frac{\sqrt{\left|\mbb X''\right|}}{1+\sqrt{\left|\mbb X''\right|}}\left(1-2\alpha''-\sqrt{\frac{2\alpha''}{\log\left|\mbb X''\right|}}\right).
\end{align*}
Similar to the argument for $\mbb X'$, with $\alpha''=5\times10^{-5}$
we can choose 
\begin{align*}
\varepsilon & =10^{-2}\sigma\sqrt{\frac{k\log\frac{p_{1}}{k}+rk}{\gamma_{k,r}}},
\end{align*}
 and obtain 
\begin{align}
\inf_{\widehat{\mb X}}\sup_{\mb X^{\star}\in\mbb X_{k,r,\varepsilon}}\P\left(\left\Vert \widehat{\mb X}-\mb X^{\star}\right\Vert _{F}\geq2.5\times10^{-3}\sigma\sqrt{\frac{k\log\frac{p_{1}}{k}+rk}{\gamma_{k,r}}}\right) & >\frac{1}{2}.\label{eq:col-partition}
\end{align}
Then with $\varepsilon=2.5\times10^{-3}\sigma\sqrt{\frac{k\log\frac{p_{1}}{k}+r\left(k\vee p_{2}\right)}{\gamma_{k,r}}}$
we can deduce from (\ref{eq:row-partition}) and (\ref{eq:col-partition})
that 
\begin{align*}
\inf_{\widehat{\mb X}}\sup_{\mb X^{\star}\in\mbb X_{k,r,\varepsilon}}\P\left(\left\Vert \widehat{\mb X}-\mb X^{\star}\right\Vert _{F}\geq2.5\times10^{-3}\sigma\sqrt{\frac{k\log\frac{p_{1}}{k}+r\left(k\vee p_{2}\right)}{\gamma_{k,r}}}\right) & >\frac{1}{2},
\end{align*}
which also guarantees the desired minimax lower bound on $\mbb X_{k,r}$. \end{proof}

\end{document}